\documentclass[11pt]{article} 
\usepackage{amsmath,amssymb,amsthm}
\usepackage{fancybox}  
\usepackage{graphicx}
\usepackage{color}

\allowdisplaybreaks

\begin{document}

\def\fl#1{\left\lfloor#1\right\rfloor}
\def\cl#1{\left\lceil#1\right\rceil}
\def\ang#1{\left\langle#1\right\rangle}
\def\stf#1#2{\left[#1\atop#2\right]} 
\def\sts#1#2{\left\{#1\atop#2\right\}}
\def\eul#1#2{\left\langle#1\atop#2\right\rangle}
\def\N{\mathbb N}
\def\Z{\mathbb Z}
\def\R{\mathbb R}
\def\C{\mathbb C}
\newcommand{\ctext}[1]{\raise0.2ex\hbox{\textcircled{\scriptsize{#1}}}}

\newtheorem{theorem}{Theorem}
\newtheorem{Prop}{Proposition}
\newtheorem{Cor}{Corollary}
\newtheorem{Lem}{Lemma}

\newenvironment{Rem}{\begin{trivlist} \item[\hskip \labelsep{\it
Remark.}]\setlength{\parindent}{0pt}}{\end{trivlist}}

\title{The $p$-Frobenius and $p$-Sylvester numbers for Fibonacci and Lucas triplets
}
                       
\author{
Takao Komatsu 
\\
\small Department of Mathematical Sciences, School of Science\\[-0.8ex]
\small Zhejiang Sci-Tech University\\[-0.8ex]
\small Hangzhou 310018 China\\[-0.8ex]
\small \texttt{komatsu@zstu.edu.cn}\\\\
Haotian Ying
\\ 
\small Department of Mathematical Sciences, School of Science\\[-0.8ex]
\small Zhejiang Sci-Tech University\\[-0.8ex]
\small Hangzhou 310018 China\\[-0.8ex]
\small \texttt
{tomyinght@gmail.com}  
}

\date{
\small MR Subject Classifications: Primary 11D07; Secondary 05A15, 11B39, 05A17, 05A19, 11D04, 11P81, 20M14 
}

\maketitle
 
\begin{abstract} 
In this paper we study a certain kind of generalized linear Diophantine problem of Frobenius. Let $a_1,a_2,\dots,a_l$ be positive integers such that their greatest common divisor is one. For a nonnegative integer $p$, denote the $p$-Frobenius number by $g_p(a_1,a_2,\dots,a_l)$, which is the largest integer that can be represented at most $p$ ways by a linear combination with nonnegative integer coefficients of $a_1,a_2,\dots,a_l$. When $p=0$, $0$-Frobenius number is the classical Frobenius number. When $l=2$, $p$-Frobenius number is explicitly given. However, when $l=3$ and even larger, even in special cases, it is not easy to give the Frobenius number explicitly, and it is even more difficult when $p>0$, and no specific example has been known. However, very recently, we have succeeded in giving explicit formulas for the case where the sequence is of triangular numbers \cite{Ko22a} or of repunits \cite{Ko22b} for the case where $l=3$. In this paper, we show the explicit formula for the Fibonacci triple when $p>0$. In addition, we give an explicit formula for the $p$-Sylvester number, that is, the total number of nonnegative integers that can be represented in at most $p$ ways. Furthermore, explicit formulas are shown concerning the Lucas triple. 
\\
{\bf Keywords:} Linear Diophantine problem of Frobenius, Frobenius numbers, Sylvester numbers, the number of representations, Ap\'ery set, Fibonacci numbers 
      
\end{abstract}

\section{Introduction}

The linear Diophantine problem of Frobenius is to find the largest integer which is not expressed by a nonnegative linear combination of given positive relatively prime integers $a_1,a_2,\dots,a_l$. Such a largest integer is called the {\it Frobenius number} \cite{sy1884}, denoted by $g(A)=g(a_1,a_2,\dots,a_l)$, where $A=\{a_1,a_2,\dots,a_l\}$. 
In the literature on the Frobenius problem, the {\it Sylvester number} or {\it genus} $n(A)=n(a_1,a_2,\dots,a_l)$, which is the total number of integers that cannot be represented as a nonnegative linear combination of $a_1,a_2,\dots,a_l$ \cite{sy1882}. 


There are many aspects studying the Frobenius problem. For example, there are algorithmic aspects to find the values or the bounds, complexity of computations, denumerants, numerical semigroup, applications to algebraic geometry and so on (see, e.g., \cite{ADG20,ra05}). 
Nevertheless, one of the motivations for our $p$-generalizations originats in the number of representations $d(n;a_1,a_2,\dots,a_l)$ to $a_1 x_1+a_2 x_2+\dots+a_l x_l=n$ for a given positive integer $n$. This number is equal to the coefficient of $x^n$ in $1/(1-x^{a_1})(1-x^{a_2})\cdots(1-x^{a_l})$ for positive integers $a_1,a_2,\dots,a_l$ with $\gcd(a_1,a_2,\dots,a_l)=1$ \cite{sy1882}.   
Sylvester \cite{sy1857} and Cayley \cite{cayley} showed that $d(n;a_1,a_2,\dots,a_l)$ can be expressed as the sum of a polynomial in $n$ of degree $k-1$ and a periodic function of period $a_1 a_2\cdots a_l$.  
In \cite{bgk01}, the explicit formula for the polynomial part is derived by using Bernoulli numbers. For two variables, a formula for $d(n;a_1,a_2)$ is obtained in \cite{tr00}. For three variables in the pairwise coprime case $d(n;a_1,a_2,a_3)$, in \cite{ko03}, the periodic function part is expressed in terms of trigonometric functions, and its results have been improved in \cite{bi20} by using floor functions so that three variables case can be easily worked with in the formula.  

In this paper, we are interested in one of the most general and most natural types of Frobenius numbers, which focuses on the number of representations. 
For a nonnegative integer $p$, the largest integer such that the number of expressions that can be represented by $a_1,a_2,\dots,a_l$ is at most $p$ is denoted by $g_p(A)=g_p(a_1,a_2,\dots,a_l)$ and may be called the {\it $p$-Frobenius number}. That is, all integers larger than $g_p(A)$ have at least the number of representations of $p+1$ or more. This generalized Frobenius number $g_p(A)$ is called the $p$-Frobenius number \cite{Ko22a,Ko22b}, which is also called the $k$-Frobenius number \cite{BDFHKMRSS} or the $s$-Frobenius number \cite{FS11}. When $p=0$, $g(A)=g_0(A)$ is the original Frobenius number.  
One can consider the largest integer $g_p^\ast(a_1,a_2,\dots,a_l)$ that has exactly $p$ distinct representations (see, e.g., \cite{BDFHKMRSS,FS11}). However, in this case, the ordering $g_0^\ast\le g_1^\ast\le \cdots$ may not hold. For example, $g_{17}^\ast(2,5,7)=43>g_{18}^\ast(2,5,7)=42$. In addition, for some $j$, $g_j^\ast$ may not exist. For example, $g_{22}^\ast(2,5,7)$ does not exist because there is no positive integer whose number of representations is exactly $22$. 
Therefore, in this paper we do not study $g_p^\ast(A)$ but $g_p(A)$. 

Similarly to the $p$-Frobenius number, the {\it $p$-Sylvester number} or the {\it $p$-genus} $n_p(A)=n_p(a_1,a_2,\dots,a_l)$ is defined by the cardinality of the set of integers which can be represented by $a_1,a_2,\dots,a_l$ at most $p$ ways. When $p=0$, $n(A)=n_0(A)$ is the original Sylvester number.  

In this paper, we are interested in one of the most crucial topics, that is, to find explicit formulas of indicators, in particular, of $p$-Frobenius numbers and $p$-Sylvester numbers. In the classical case, that is, for $p=0$, explicit formulas of $g(a_1,a_2)$ and $n(a_1,a_2)$ are shown when $l=2$ \cite{sy1882,sy1884}.   
However, for $l\ge 3$, $g(A)$ cannot be given by any set of closed formulas which can be reduced to a finite set of certain polynomials \cite{cu90}. For $l=3$, there are several useful algorithms to obtain the Frobenius number (see, e.g., \cite{Fel2006,Johnson,Rodseth}). For the concretely given three positive integers, if the conditions are met, the Frobenius number can be completely determined by the method of case-dividing by A. Tripathi \cite{tr17}.  
Although it is possible to find the Frobenius number by using the results in \cite{tr17}, it is another question whether the Frobenius number can be given by a closed explicit expression for some special triplets, and special considerations are required. 
 Only for some special cases, explicit closed formulas have been found, including arithmetic, geometric, Mersenne, repunits and triangular (see \cite{RR18,RBT2015,RBT2017} and references therein).  

For $p>0$, if $l=2$, explicit formulas of $g_p(a_1,a_2)$ and $n_p(a_1,a_2)$ are still given without any difficulty (see, e.g., \cite{bk11}). However, if $l\ge 3$, no explicit formula had been given even in a special case. However, quite recently, we have succeeded in giving explicit formulas for the case where the sequence is of triangular numbers \cite{Ko22a} or of repunits \cite{Ko22b} for the case where $l=3$. 

 In this paper, we give an explicit formula for the $p$-Frobenius number for the Fibonacci number triple $(F_i,F_{i+2},F_{i+k})$ ($i,k\ge 3$). Here, the $n$-th Fibonacci number $F_n$ is defined by $F_n=F_{n-1}+F_{n-2}$ ($n\ge 2$) with $F_1=1$ and $F_0=0$. Our main results (Theorem \ref{th:dp} below) is a kind of generalizations of \cite[Theorem 1]{MAR} when $p=0$. However, when $p>0$, the exact situation is not completely similar to the case where $p=0$, and the case by case discussion is necessary.  
As analogues, we also show explicit formulas of $g_p(L_i,L_{i+2},L_{i+k})$ for Lucas numbers $L_n$ with $i,k\ge 3$. Here, Lucas numbers $L_n$ satisfy the recurrence relation $L_n=L_{n-1}+L_{n-2}$ ($n\ge 2$) with $L_0=2$ and $L_1=1$. 
By using our constructed framework, we can also find explicit formulas of the $p$-Sylvester numbers $n_p(F_i,F_{i+2},F_{i+k})$ and $n_p(L_i,L_{i+2},L_{i+k})$. Our  result (Theorem \ref{th:np}) can extend the result in \cite[Corollary 2]{MAR}.    
The main idea is to find the explicit structure of the elements of an Ap\'ery set \cite{Apery}. In addition, we use a complete residue system, studied initially by Selmer \cite{se77}. By using Ap\'ery sets, we construct the first least set of the complete residue system, then the second least set of the complete residue system, and the third, and so on. As a basic framework, we use a similar structure in \cite{Ko22b}. We can safely say that one of our theorems (Theorem \ref{th:dp} below) is a kind of generalizations of \cite[Theorem 1]{MAR}. Nevertheless, for each nonnegative integer $p$, the exact situation is not completely similar, but the case by case discussion is necessary.

\section{Preliminaries}   

Without loss of generality, we assume that $a_1=\min\{a_1,a_2,\dots,a_l\}$.   
For each $0\le i\le a_1-1$, we introduce the positive integer $m_i^{(p)}$ congruent to $i$ modulo $a_1$ such that the number of representations of $m_i^{(p)}$ is bigger than or equal to $p+1$, and that of $m_i-a_1$ is less than or equal to $p$. Note that $m_0^{(0)}$ is defined to be $0$.      The set 
$$
{\rm Ap} (A;p)={\rm Ap} (a_1,a_2,\dots,a_l;p)=\{m_0^{(p)},m_1^{(p)},\dots,m_{a_1-1}^{(p)}\}\,, 
$$ 
is called the {\it $p$-Ap\'ery set} of $A=\{a_1,a_2,\dots,a_l\}$ for a nonnegative integer $p$, which is congruent to the set 
$$
\{0,1,\dots,a_1-1\}\pmod{a_1}\,. 
$$  
When $p=0$, the $0$-Ap\'ery set is the original Ap\'ery set \cite{Apery}. 

It is hard to find any explicit formula of $g_p(a_1,a_2,\dots,a_l)$ when $l\ge 3$.  Nevertheless, the following convenient formulas are known (see \cite{Ko22}). After finding the structure of $m_j^{(p)}$, we can obtain $p$-Frobenius or $p$-Sylvester numbers for triple $(F_i,F_{i+2},F_{i+k})$.   

\begin{Lem}  
Let $k$, $p$ and $\mu$ be integers with $k\ge 2$, $p\ge 0$ and $\mu\ge 1$.  
Assume that $\gcd(a_1,a_2,\dots,a_l)=1$.  We have 
\begin{align}  
g_p(a_1,a_2,\dots,a_l)&=\max_{0\le j\le a_1-1}m_j^{(p)}-a_1
\label{mp-g}\,,
\\  
n_p(a_1,a_2,\dots,a_l)&=\frac{1}{a_1}\sum_{j=0}^{a_1-1}m_j^{(p)}-\frac{a_1-1}{2}\,. 
\label{mp-n} 
\end{align} 
\label{cor-mp}
\end{Lem}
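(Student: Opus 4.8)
The plan is to reduce both identities to a single structural fact about representation counts along arithmetic progressions of common difference $a_1$. Write $d(n)=d(n;a_1,\dots,a_l)$ for the number of representations. The cornerstone observation is the monotonicity
\[
d(n+a_1)\ge d(n)\qquad(n\ge 0),
\]
which holds because the map sending a representation $n=a_1x_1+\cdots+a_lx_l$ to $(x_1+1,x_2,\dots,x_l)$ is an injection from the representations of $n$ into those of $n+a_1$. Since $\gcd(a_1,\dots,a_l)=1$, every sufficiently large integer has arbitrarily many representations, so in each residue class $j$ modulo $a_1$ there is a genuine threshold; combined with monotonicity this shows that $m_j^{(p)}$ is exactly the smallest positive integer $\equiv j\pmod{a_1}$ with $d\ge p+1$, that every $n\equiv j$ with $0\le n<m_j^{(p)}$ satisfies $d(n)\le p$, and that every $n\equiv j$ with $n\ge m_j^{(p)}$ satisfies $d(n)\ge p+1$. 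The convention $m_0^{(0)}=0$ is exactly what this description forces, since $d(0)=1$.

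Granting this threshold picture, the first identity is immediate. Every integer lies in one residue class $j$, and within that class the integers with $d\le p$ are precisely $\{n\equiv j:\ n<m_j^{(p)}\}$, whose largest element is $m_j^{(p)}-a_1$. Hence the largest integer represented at most $p$ ways is $\max_{0\le j\le a_1-1}(m_j^{(p)}-a_1)=\max_{0\le j\le a_1-1}m_j^{(p)}-a_1$, which is \eqref{mp-g}.

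For the second identity I would count class by class. Writing $m_j^{(p)}=j+t_j a_1$ with $t_j=(m_j^{(p)}-j)/a_1\ge 0$, the nonnegative integers $\equiv j$ with $d\le p$ are exactly $j,j+a_1,\dots,m_j^{(p)}-a_1$, namely $t_j$ of them. Summing over $j$ and using $\sum_{j=0}^{a_1-1}j=a_1(a_1-1)/2$ gives
\[
n_p(A)=\sum_{j=0}^{a_1-1}\frac{m_j^{(p)}-j}{a_1}=\frac1{a_1}\sum_{j=0}^{a_1-1}m_j^{(p)}-\frac{a_1-1}{2},
\]
which is \eqref{mp-n}.

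I do not expect a serious obstacle here: once monotonicity along the progressions is in place, both formulas are bookkeeping. The only points needing care are the existence of each $m_j^{(p)}$ (guaranteed by $\gcd(a_1,\dots,a_l)=1$) and the boundary class $j=0$, where the convention $m_0^{(0)}=0$ makes the count $t_0=0$ agree with the fact that $0$ has a single representation. The stray hypotheses on $k$ and $\mu$ in the statement play no role in this argument.
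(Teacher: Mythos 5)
Your proof is correct, and there is nothing in the paper to compare it against: the paper states this lemma without proof, citing \cite{Ko22} for it. Your argument --- monotonicity $d(n+a_1)\ge d(n)$ via the injection $(x_1,\dots,x_l)\mapsto(x_1+1,x_2,\dots,x_l)$, which turns each residue class modulo $a_1$ into a thresholded progression with threshold $m_j^{(p)}$, followed by taking the maximum for \eqref{mp-g} and counting $\bigl(m_j^{(p)}-j\bigr)/a_1$ elements per class for \eqref{mp-n} --- is precisely the standard route, generalizing Brauer--Shockley \eqref{eq:bs} and Selmer \eqref{eq:se} from $p=0$ to arbitrary $p$, and matches how the cited reference proceeds. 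You are also right that the hypotheses on $k$ and $\mu$ are vestigial and play no role, and your handling of the convention $m_0^{(0)}=0$ (so that $0$, having one representation, is excluded from the $p=0$ count) is the correct reading of the definition.
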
 

\noindent 
{\it Remark.}  
When $p=0$, (\ref{mp-g}) is the formula by Brauer and Shockley \cite{bs62}: 
\begin{equation}   
g(a_1,a_2,\dots,a_l)=\left(\max_{1\le j\le a_1-1}m_j\right)-a_1\,, 
\label{eq:bs}
\end{equation} 
where $m_j=m_j^{(0)}$ ($1\le j\le a_1-1$) with $m_0=0$.  
When $p=0$, (\ref{mp-n}) is the formula by Selmer \cite{se77}: 
\begin{equation}   
n(a_1,a_2,\dots,a_l)=\frac{1}{a_1}\sum_{j=1}^{a_1-1}m_j-\frac{a_1-1}{2}\,.  
\label{eq:se}
\end{equation}
Note that $m_0=m_0^{(0)}=0$.  
A more general form by using Bernoulli numbers is given in \cite{Ko22}, as well as the concept of weighted sums \cite{KZ0,KZ}.   
\bigskip 

It is necessary to find the exact situation of $0$-Ap\'ery set ${\rm Ap}(F_i,0)$, the least complete residue system, which was initially studied in \cite{se77}. Concerning Fibonacci numbers, we use the framework in \cite{MAR}.   

Throughout this paper, for a fixed integer $i$, we write 
$$
{\rm Ap} (F_i;p)=\{m_0^{(p)},m_1^{(p)},\dots,m_{F_i-1}^{(p)}\}
$$ 
for short.  Then, we shall construct the set of the least complete residue system ${\rm Ap} (F_i;0)$. That is, $m_j\not\equiv m_h\pmod{F_i}$ ($0\le j<h\le F_i-1$), and if for a positive integer $M$, $M\equiv j$ and $M\ne m_j$ ($0\le j\le F_i-1$), then $M>m_j$. Then, for the case $p=1$ we shall construct the second set of the least complete residue system ${\rm Ap} (F_i;1)$. That is, $m_j^{(1)}\not\equiv m_h^{(1)}\pmod{F_i}$ ($0\le j<h\le F_i-1$), $m_j^{(1)}\equiv m_j\pmod{F_i}$ ($0\le j\le F_i-1$), and there does not exist an integer $M$ such that $m_j^{(1)}>M>m_j$ and $M\equiv j\pmod{F_i}$. 
Similarly, for $p=2$, we shall construct the third set of the least complete residue system ${\rm Ap} (F_i;2)$. That is, $m_j^{(2)}\not\equiv m_h^{(2)}\pmod{F_i}$ ($0\le j<h\le F_i-1$), $m_j^{(2)}\equiv m_j^{(1)}\pmod{F_i}$ ($0\le j\le F_i-1$), and there does not exist an integer $M$ such that $m_j^{(2)}>M>m_j^{(1)}$ and $M\equiv j\pmod{F_i}$.

By using a similar frame as in \cite{MAR}, we firstly show an analogous result about Lucas triple  $(L_i,L_{i+2},L_{i+k})$ when $p=0$. As a preparation we shall show the result when $p=0$, with a sketch of the proof.   
The results about Fibonacci numbers can be applied to get those about Lucas numbers.  When $p=0$, by setting integers $r$ and $\ell$ as $L_i-1=r F_k+\ell$ with $r\ge 0$ and $0\le\ell\le F_k-1$, and by using the identity $L_n=L_m F_{n-m+1}+L_{m-1}F_{n-m}$, we can get an analogous identity of the Fibonacci one in \cite[Theorem 1]{MAR}. 

\begin{theorem}
For integers $i,k\ge 3$ and $r=\fl{(L_i-1)/F_k}$, we have 
\begin{align*}
&g_0(L_i,L_{i+2},L_{i+k})\\
&=
 \left\{
    \begin{alignedat}{2}
        & (L_i-1)L_{i+2}-L_i(r F_{k-2}+1) &\quad&    \text{if $r=0$, or $r\ge 1$ and}  \\
&&&\text{$(L_i-r F_k)L_{i+2}>F_{k-2}L_i$},\\
        & (r F_k-1)L_{i+2}-L_i\bigl((r-1)F_{k-2}+1\bigr) &     &    \text{otherwise}\,.
    \end{alignedat}
    \right. 
\end{align*} 
\label{th:mar-lucas}
\end{theorem}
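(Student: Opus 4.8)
The plan is to compute the $0$-Apéry set ${\rm Ap}(L_i;0)=\{m_0,\dots,m_{L_i-1}\}$ explicitly and then read off $g_0$ from the Brauer--Shockley formula (\ref{mp-g}) of Lemma \ref{cor-mp}. Since $i,k\ge 3$, the Lucas sequence is strictly increasing, so $L_i<L_{i+2}<L_{i+k}$ and $a_1=L_i$ is the least generator; the Euclidean algorithm gives $\gcd(L_i,L_{i+2})=\gcd(L_i,L_{i-1})=1$, so the triple is admissible and $L_{i+2}$ is invertible modulo $L_i$. Because $L_i\equiv 0\pmod{L_i}$, each $m_j$ is the least value of $xL_{i+2}+yL_{i+k}$ with $x,y\ge 0$ lying in the residue class $j$. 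Using the stated identity $L_n=L_mF_{n-m+1}+L_{m-1}F_{n-m}$ with $m=i$, I would first record the two congruences $L_{i+2}\equiv L_{i-1}$ and $L_{i+k}\equiv L_{i-1}F_k\equiv F_kL_{i+2}\pmod{L_i}$, so that the class of $xL_{i+2}+yL_{i+k}$ depends only on $x+yF_k\bmod L_i$.

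The arithmetic key is the exact identity $L_{i+k}=F_kL_{i+2}-L_iF_{k-2}$, which follows from the same Lucas identity together with $F_{k+1}-2F_k=-F_{k-2}$. Setting $S=x+yF_k$, it rewrites the value as $xL_{i+2}+yL_{i+k}=S\,L_{i+2}-y\,L_iF_{k-2}$. For a fixed class $j$, let $s_j\in\{0,\dots,L_i-1\}$ be determined by $s_jL_{i+2}\equiv j\pmod{L_i}$; the admissible $S$ are then exactly those with $S\equiv s_j\pmod{L_i}$, $S\ge 0$. For fixed $S$ the value decreases in $y$, so one takes $y=\fl{S/F_k}$ (and $x=S-yF_k$), giving value $h(S):=S\,L_{i+2}-\fl{S/F_k}\,L_iF_{k-2}$. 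I would then show that $h$ attains its minimum on the progression at $S=s_j$: for $S=s_j+tL_i$ with $t\ge 1$ one has $h(S)-h(s_j)\ge tL_iL_{i+2}-\tfrac{F_{k-2}}{F_k}L_iS$, and since $S<(t+1)L_i$ this is positive because $\tfrac{t}{t+1}\ge\tfrac12$, $\tfrac{L_{i+2}}{L_i}>2$ and $\tfrac{F_k}{F_{k-2}}>1$ force $\tfrac{t}{t+1}\cdot\tfrac{L_{i+2}}{L_i}\cdot\tfrac{F_k}{F_{k-2}}>1$. As $s_j$ runs through $\{0,\dots,L_i-1\}$ bijectively, this yields
$$
\{m_0,\dots,m_{L_i-1}\}=\Bigl\{\,s\,L_{i+2}-\fl{s/F_k}\,L_iF_{k-2}\ :\ 0\le s\le L_i-1\,\Bigr\}.
$$

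It then remains to maximize $h(s)$ over $0\le s\le L_i-1$ and subtract $L_i$ as in (\ref{mp-g}). I would group the $s$ by $q=\fl{s/F_k}\in\{0,\dots,r\}$, where $r=\fl{(L_i-1)/F_k}$; on each block $h$ increases in $s$, so the block maxima are $G(q)=\bigl((q+1)F_k-1\bigr)L_{i+2}-qL_iF_{k-2}$ for $0\le q\le r-1$ and $h(L_i-1)=(L_i-1)L_{i+2}-rL_iF_{k-2}$ for $q=r$. The discrete difference $G(q+1)-G(q)=F_kL_{i+2}-L_iF_{k-2}=L_{i+k}>0$ shows $G$ is increasing, so the overall maximum is $\max\{G(r-1),h(L_i-1)\}$ (just $h(L_i-1)$ when $r=0$). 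Their difference is $h(L_i-1)-G(r-1)=(L_i-rF_k)L_{i+2}-L_iF_{k-2}$, which is exactly the dichotomy in the statement: when $r=0$, or when $r\ge 1$ and $(L_i-rF_k)L_{i+2}>F_{k-2}L_i$, the maximum is $h(L_i-1)$ and subtracting $L_i$ gives $(L_i-1)L_{i+2}-L_i(rF_{k-2}+1)$; otherwise the maximum is $G(r-1)$, giving $(rF_k-1)L_{i+2}-L_i\bigl((r-1)F_{k-2}+1\bigr)$.

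The routine parts are the two identities and the block bookkeeping; the step I expect to require the most care is pinning down the minimal representative, i.e.\ proving that in each residue class the optimum uses $S=s_j$ and $y=\fl{s_j/F_k}$ rather than a larger $S$ with extra copies of $L_{i+k}$. This is precisely where the size comparison $L_{i+2}/L_i>2>F_{k-2}/F_k$ enters, and it is what guarantees that the cheap but large generator $L_{i+k}$ is never worth introducing beyond what $s_j$ already forces (this also covers, uniformly, the delicate regime $r=0$, where $F_k$ may dwarf $L_{i+2}$).
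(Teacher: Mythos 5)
Your proof is correct and follows essentially the same route as the paper's: the same key identity $L_{i+k}=F_kL_{i+2}-F_{k-2}L_i$, the same staircase Ap\'ery set (your parameter $S=x+yF_k$ and function $h(S)$ are just a linearization of the paper's table of $t_{x,y}$), and the same final comparison of the two corner candidates $G(r-1)=t_{F_k-1,r-1}$ and $h(L_i-1)=t_{\ell,r}$ followed by Brauer--Shockley. The only substantive difference is that you actually prove the least-complete-residue-system claim (via the estimate $\frac{t}{t+1}\cdot\frac{L_{i+2}}{L_i}\cdot\frac{F_k}{F_{k-2}}>1$), which the paper's sketch merely asserts.
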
 
\begin{proof}  
Consider the linear representation  
\begin{align*}
t_{x,y}:&=x L_{i+2}+y L_{i+k}\\
&=(x+y F_k)L_{i+2}-y F_{k-2}L_i\quad(x,y\ge 0)\,. 
\end{align*}
Then, by $\gcd(L_i,L_{i+2})=1$, we can prove that the above table represents the least complete residue system $\{0,1,\dots,L_i-1\}\pmod{L_i}$.   

\begin{table}[htbp]
  \centering
\begin{tabular}{cccc}
\cline{1-2}\cline{3-4}
\multicolumn{1}{|c}{$t_{0,0}$}&$\cdots$&$\cdots$&\multicolumn{1}{c|}{$t_{F_k-1,0}$}\\
\multicolumn{1}{|c}{$t_{0,1}$}&$\cdots$&$\cdots$&\multicolumn{1}{c|}{$t_{F_k-1,1}$}\\
\multicolumn{1}{|c}{$\vdots$}&&&\multicolumn{1}{c|}{$\vdots$}\\
\multicolumn{1}{|c}{$t_{0,r-1}$}&$\cdots$&$\cdots$&\multicolumn{1}{c|}{$t_{F_k-1,r-1}$}\\
\cline{4-4}
\multicolumn{1}{|c}{$t_{0,r}$}&$\cdots$&\multicolumn{1}{c|}{$t_{\ell,r}$}&\\
\cline{1-2}\cline{2-3}
\end{tabular}
  \label{tb:g0system}
\end{table}

That is, we can prove that none of two elements among this set is not congruent modulo $F_i$, and if there exists an element congruent to any of the elements among this set, then such an element is bigger and not in this set. 

When $r=0$, the largest element among all the $t_{x,y}$'s in this table is $t_{\ell,0}$. When $r\ge 1$, the largest element is either $t_{F_k-1,r-1}$ or $t_{\ell,r}$. Since $t_{F_k-1,r-1}<t_{\ell,r}$ is equivalent to $F_{k-2}L_i<(L_i-r F_k)L_{i+2}$, the result is followed by the identity (\ref{eq:bs}). 
The first case is given by $t_{\ell,r}-L_i$, and the second is given by $t_{F_k-1,r-1}-L_i$. 
\end{proof}

\section{Main results when $p=1$}  

Now, let us begin to consider the case $p\ge 1$. We shall obtain the $p$-Frobenius number using Lemma \ref{cor-mp} (\ref{mp-g}). For this it is necessary to know the structure of the elements of the $p$-Ap\'ery set, and the structure of the elements of the $p$-Ap\'ery set depends on the structure of the elements of the ($p-1$)-Ap\'ery set. 
 Therefore, in the case of $p=1$, the structure of the elements of the $1$-Ap\'ery set set is analyzed from the structure of the elements of the $0$-Ap\'ery set, which is the original Ap\'ery set, thereby obtaining the $1$-Frobenius number.  
When $p=1$, we have the following.  

\begin{theorem}  
For $i\ge 3$, we have 
\begin{align}  
g_1(F_i,F_{i+2},F_{i+k})&=(2 F_i-1)F_{i+2}-F_i\quad(k\ge i+2)\,, 
\label{eq:d1+2}\\
g_1(F_i,F_{i+2},F_{2 i+1})&=(F_{i-2}-1)F_{i+2}+F_{2 i+1}-F_i\,, 
\label{eq:d1+1}\\
g_1(F_i,F_{i+2},F_{2 i})&=(F_{i}-1)F_{i+2}+F_{2 i}-F_i\,. 
\label{eq:d1+0}
\end{align} 
When $r=\fl{(F_i-1)/F_k}\ge 1$, that is, $k\le i-1$, we have 
\begin{align}
&g_1(F_i,F_{i+2},F_{i+k})\notag\\
&=\begin{cases}
(F_i-r F_k-1)F_{i+2}+(r+1)F_{i+k}-F_i&\text{if $(F_i-r F_k)F_{i+2}\ge F_{k-2}F_i$},\\
(F_k-1)F_{i+2}+r F_{i+k}-F_i&\text{if $(F_i-r F_k)F_{i+2}<F_{k-2}F_i$}. 
\end{cases}
\label{eq:d1-123}  
\end{align}
\label{th:d1} 
\end{theorem}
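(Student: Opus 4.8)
The plan is to invoke Lemma~\ref{cor-mp}, so that the whole problem collapses to identifying the $1$-Ap\'ery set $\mathrm{Ap}(F_i;1)=\{m_0^{(1)},\dots,m_{F_i-1}^{(1)}\}$ and then reading off $g_1=\max_j m_j^{(1)}-F_i$ from~(\ref{mp-g}). Following the framework of \cite{Ko22b} and \cite{MAR}, I would build this set on top of the $0$-Ap\'ery set, whose elements are exactly the entries $t_{x,y}=xF_{i+2}+yF_{i+k}$ of the table appearing in the Fibonacci analogue of Theorem~\ref{th:mar-lucas}: with $r=\fl{(F_i-1)/F_k}$ and $\ell=F_i-1-rF_k$, the indices run over $0\le x\le F_k-1$ and $0\le y\le r$, the last row being truncated at $x=\ell$. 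The engine of the argument is the Fibonacci identity
\begin{equation*}
F_{i+k}=F_kF_{i+2}-F_{k-2}F_i,\qquad\text{equivalently}\qquad F_kF_{i+2}=F_{k-2}F_i+F_{i+k},
\end{equation*}
together with the trivial relation $F_i\cdot F_{i+2}=F_{i+2}\cdot F_i$. Since $\gcd(F_i,F_{i+2})=1$, these two vectors generate the full lattice of relations among $(F_i,F_{i+2},F_{i+k})$, so any two representations of the same integer differ by an integer combination of them. Accordingly, $m_j^{(1)}$ is the least integer obtained by adding to $m_j^{(0)}$ just enough so that some such combination yields a second, still nonnegative, representation.

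The construction of each $m_j^{(1)}$ then comes down to deciding which relation move becomes available first, and this is exactly what forces the case split. When $k\ge i+2$ (so $r=0$ and $F_{i+k}$ is far too large to enter any small representation), the only usable move is the trivial relation, which requires $F_{i+2}$ extra copies of $F_i$; hence $m_j^{(1)}=m_j^{(0)}+F_iF_{i+2}$ uniformly, and the maximum, attained at $m_j^{(0)}=(F_i-1)F_{i+2}$, gives~(\ref{eq:d1+2}). When $k=i$ or $k=i+1$, the third generator $F_{2i}=F_iL_i$, respectively $F_{2i+1}$, has shrunk enough that a representation using a single copy of $F_{i+k}$ acquires its partner representation sooner than the trivial relation would deliver one; isolating the residue class that maximizes $m_j^{(1)}$ and simplifying through the identities $F_{i-2}+F_{i+1}=2F_i$ and $F_{i+1}^2-F_i^2=F_{i-1}F_{i+2}$ produces~(\ref{eq:d1+0}) and~(\ref{eq:d1+1}). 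Finally, when $k\le i-1$ (so $r\ge1$) the $0$-Ap\'ery table has several full rows, and $\mathrm{Ap}(F_i;1)$ is obtained by shifting the table one row downward; the largest entry is then either the extended partial row $t_{\ell,r+1}$ or the completed row $t_{F_k-1,r}$, and whether $(F_i-rF_k)F_{i+2}\ge F_{k-2}F_i$ holds---the very dichotomy governing the $p=0$ formula---selects the branch, yielding the two cases of~(\ref{eq:d1-123}).

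Two verifications must then accompany every case. The easier one is that the proposed $m_j^{(1)}$ are mutually incongruent modulo $F_i$ and congruent to the respective $m_j^{(0)}$, so that they really constitute $\mathrm{Ap}(F_i;1)$; this is the same complete-residue-system bookkeeping used for the $p=0$ table, since each increment is a fixed multiple of $F_i$ or of a generator and hence preserves residues. The harder one---where I expect the real work to lie---is minimality: that no integer strictly between $m_j^{(0)}$ and the proposed $m_j^{(1)}$ in the same class has two representations. This is the crux because it demands ruling out every competing second representation, in particular confirming that in the boundary cases $k\in\{i,i+1\}$ the $F_{i+k}$-move genuinely beats the trivial-relation move for the maximizing class and that no smaller third combination intervenes. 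I would dispatch it by fixing a candidate $n\equiv j$ with $m_j^{(0)}<n<m_j^{(1)}$, bounding the admissible $F_{i+k}$- and $F_{i+2}$-coordinates using the inequalities $x\le F_k-1$ and $y\le r$ inherited from the $0$-Ap\'ery analysis, and checking that in each resulting range at most one representation survives. It is this case-by-case minimality check, rather than the construction itself, that constitutes the main obstacle and explains why the single clean formula of the $p=0$ theorem must here break into several.
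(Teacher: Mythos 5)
Your proposal is correct and follows essentially the same route as the paper: reduce to the $1$-Ap\'ery set via Lemma \ref{cor-mp}, build it on the $0$-Ap\'ery table $t_{x,y}$ using $F_kF_{i+2}=F_{k-2}F_i+F_{i+k}$, split into the cases $k\ge i+2$, $k\in\{i,i+1\}$, $k\le i-1$, and settle the maximum by comparing $t_{F_k-1,r}$ with $t_{\ell,r+1}$ via the stated dichotomy, exactly as in the paper's Cases 1(1), 1(2) and 2. The only imprecision is describing the $r\ge 1$ construction as ``shifting the table one row downward'': in fact most residue classes move into the second block via $t_{x,y}\equiv t_{F_k+x,y-1}\pmod{F_i}$ (these elements are smaller than $t_{x,y+1}$), and only the first row wraps around to complete row $r$ and begin row $r+1$; this happens to leave your two maximal candidates, and hence the $g_1$ formulas, unchanged, though it would matter for the accompanying $p$-Sylvester computations.
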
  

\noindent 
{\it Remark.} 
When $r\ge1$ and $k=i-1,i-2,i-3,i-4,i-5$, we have more explicit formulas.  
\begin{align*}
g_1(F_i,F_{i+2},F_{2 i-1})&=(F_{i-2}-1)F_{i+2}+2 F_{2 i-1}-F_i\quad(i\ge 4)\,, 
\\
g_1(F_i,F_{i+2},F_{2 i-2})&=(F_{i-3}-1)F_{i+2}+3 F_{2 i-2}-F_i\quad(i\ge 5)\,, 
\\
g_1(F_i,F_{i+2},F_{2 i-3})&=\begin{cases}
(F_{i-6}-1)F_{i+2}+5 F_{2 i-3}-F_i&\text{$(i\ge 7)$}\\
F_{i+2}+4 F_{2 i-3}-F_i(=149)&\text{$(i=6)$}\,,
\end{cases}
\\
g_1(F_i,F_{i+2},F_{2 i-4})&=(F_{i-5}+F_{i-7}-1)F_{i+2}+7 F_{2 i-4}-F_i\,,  
\\
g_1(F_i,F_{i+2},F_{2 i-5})&=\begin{cases}
(F_{i-5}-1)F_{i+2}+11 F_{2 i-5}-F_i&\text{$(i\ge 10)$}\\
12 F_{2 i-5}-F_i&\text{$(i=9)$}\\
11 F_{2 i-5}-F_i&\text{$(i=8)$}
\,.  
\end{cases}
\end{align*}

\begin{proof} 
Put the linear representation  
\begin{align*}
t_{x,y}:&=x F_{i+2}+y F_{i+k}\\
&=(x+y F_k)F_{i+2}-y F_{k-2}F_i\quad(x,y\ge 0)\,. 
\end{align*}
For given $F_i$ and $F_k$, integers $r$ and $\ell$ are determined uniquely as $F_i-1=r F_k+\ell$ with $0\le\ell\le F_k-1$. 
 
\begin{table}[htbp]
  \centering
\scalebox{0.7}{
  \begin{tabular}{cccccccccccc}
\cline{1-2}\cline{3-4}\cline{5-6}\cline{7-8}\cline{9-10}\cline{11-12}
\multicolumn{1}{|c}{\shadowbox{$t_{0,0}$}}&\shadowbox{$t_{1,0}$}&$\cdots$&&$\cdots$&\multicolumn{1}{c|}{\shadowbox{$t_{F_k-1,0}$}}&$t_{F_k,0}$&$t_{F_k+1,0}$&$\cdots$&&$\cdots$&\multicolumn{1}{c|}{$t_{2 F_k-1,0}$}\\
\multicolumn{1}{|c}{$t_{0,1}$}&$t_{1,1}$&$\cdots$&&$\cdots$&\multicolumn{1}{c|}{$t_{F_k-1,1}$}&$t_{F_k,1}$&$t_{F_k+1,1}$&$\cdots$&&$\cdots$&\multicolumn{1}{c|}{$t_{2 F_k-1,1}$}\\
\multicolumn{1}{|c}{$t_{0,2}$}&$t_{1,2}$&$\cdots$&&$\cdots$&\multicolumn{1}{c|}{$t_{F_k-1,2}$}&$\vdots$&$\vdots$&&&&\multicolumn{1}{c|}{$\vdots$}\\
\multicolumn{1}{|c}{$\vdots$}&$\vdots$&&&&\multicolumn{1}{c|}{$\vdots$}&$t_{F_k,r-2}$&$t_{F_k+1,r-2}$&$\cdots$&&$\cdots$&\multicolumn{1}{c|}{\Ovalbox{$t_{2 F_k-1,r-2}$}}\\
\cline{10-11}\cline{11-12}
\multicolumn{1}{|c}{$t_{0,r-1}$}&$t_{1,r-1}$&$\cdots$&&$\cdots$&\multicolumn{1}{c|}{$t_{F_k-1,r-1}$}&$t_{F_k,r-1}$&$\cdots$&\multicolumn{1}{c|}{\Ovalbox{$t_{F_k+\ell,r-1}$}}&&&\\
\cline{4-5}\cline{5-6}\cline{7-8}\cline{8-9}
\multicolumn{1}{|c}{$t_{0,r}$}&$\cdots$&\multicolumn{1}{c|}{$t_{\ell,r}$}&$t_{\ell+1,r}$&$\cdots$&\multicolumn{1}{c|}{\Ovalbox{$t_{F_k-1,r}$}}&&&&&&\\
\cline{1-2}\cline{3-4}\cline{5-6}
\multicolumn{1}{|c}{$t_{0,r+1}$}&$\cdots$&\multicolumn{1}{c|}{\Ovalbox{$t_{\ell,r+1}$}}&&&&&&&&&\\
\cline{1-2}\cline{2-3}
  \end{tabular}
} 
  \caption{${\rm Ap}(F_i;0)$ and ${\rm Ap}(F_i;1)$ for $r\ge 1$}
  \label{tb:g1system}
\end{table}

The second set ${\rm Ap}(F_i;1)$ can be yielded from the first set ${\rm Ap}(F_i;0)$ as follows. Assume that $r\ge 1$. Only the first line $\{t_{0,0},t_{1,0},\dots,t_{F_k-1,0}\}$ moves to fill the last gap in the ($r+1$)-st line, and the rest continue to the next ($r+2$)-nd line.  
Everything else from the second line shifts up by $1$ and moves to the next right block (When $r=1$, the new right block consists of only one line $t_{F_k,0},\cdots,t_{F_k+\ell,0}$, but this does not affect the final result). 
\begin{align*}  
t_{0,1}&\equiv t_{F_k,0},&t_{1,1}&\equiv t_{F_k+1,0},&\quad\dots,\quad t_{F_k-1,1}&\equiv t_{2 F_k-1,0},\\ 
t_{0,2}&\equiv t_{F_k,1},&t_{1,2}&\equiv t_{F_k+1,1},&\quad\dots,\quad t_{F_k-1,2}&\equiv t_{2 F_k-1,1},\\ 
\dots\\
t_{0,r-1}&\equiv t_{F_k,r-2},&t_{1,r-1}&\equiv t_{F_k+1,r-2},&\quad\dots,\quad t_{F_k-1,r-1}&\equiv t_{2 F_k-1,r-2},\\ 
t_{0,r}&\equiv t_{F_k,r-1},&\!\!\!\!\quad\dots,&\!\!\!\!&t_{\ell,r}\equiv t_{F_k+\ell,r-1},&&
\end{align*} 
$$
\qquad\qquad\qquad\qquad t_{0,0}\equiv t_{\ell+1,r},\quad\dots,\quad t_{F_k-\ell-2,0}\equiv t_{F_k-1,r},
$$
$$
\!\!\!\!\!\!\!\!\!\!\!\! t_{F_k-\ell-1,0}\equiv t_{0,r+1},\quad\dots,\quad t_{F_k-1,0}\equiv t_{\ell,r+1}.
$$  
The first group is summarized as 
$$
t_{x,y}\equiv t_{F_k+x,y-1}\pmod{F_i} 
$$ 
for $0\le x\le F_k-1$ and $1\le y\le r-1$ or $0\le x\le\ell$ and $y=r$. This congruence is valid because 
\begin{align*}
&t_{x,y}=(x+y F_k)F_{i+2}-y F_{k-2}F_i\\
&\equiv(F_k+x+(y-1)F_k)F_{i+2}-(y-1)F_{k-2}F_i=t_{F_k+x,y-1}\pmod{F_i}\,.  
\end{align*}
The second group is valid because for $0\le x\le F_k-\ell-2$,  
\begin{align*}
t_{x,0}&=x F_{i+2}\\
&\equiv(\ell+1+x+r F_k)F_{i+2}-r F_{k-2}F_i=t_{\ell+1+x,r}\pmod{F_i}\,.  
\end{align*}
The third group is valid because for $0\le x\le \ell$,  
\begin{align*}
t_{F_k-\ell-1+x,0}&=(F_k-\ell-1+x)F_{i+2}\\
&\equiv(x+(r+1)F_k)F_{i+2}-(r+1)F_{k-2}F_i=t_{x,r+1}\pmod{F_i}\,.  
\end{align*}

\begin{table}[htbp]
  \centering
\scalebox{0.7}{
\begin{tabular}{ccccccc} 
\cline{1-2}\cline{3-4}\cline{5-6}\cline{6-7}
\multicolumn{1}{|c}{$t_{0,0}$}&$\cdots$&$\cdots$&\multicolumn{1}{c|}{$t_{\ell,0}$}&$t_{\ell+1,0}$&$\cdots$&\multicolumn{1}{c|}{\Ovalbox{$t_{F_k-1,0}$}}\\
\cline{1-2}\cline{3-4}\cline{5-6}\cline{6-7}
\multicolumn{1}{|c}{$t_{0,1}$}&$\cdots$&\multicolumn{1}{c|}{\Ovalbox{$t_{2\ell+1-F_k,1}$}}&&&&\\
\cline{1-2}\cline{2-3}
\end{tabular}
} 
  \caption{${\rm Ap}(F_i;0)$ and ${\rm Ap}(F_i;1)$ for $r=0$ and $2\ell+1\ge F_k$}
  \label{tb:g1system01}
\end{table}

Assume that $r=0$. The first set ${\rm Ap}(F_i;0)$ consists of only the first line. 
If $2\ell+1\ge F_k$, then  the second set ${\rm Ap}(F_i;1)$ can be yielded by moving to fill the last gap in the line, the rest continuing to the next line. 
\begin{align*}
t_{0,0}&\equiv t_{\ell+1,0},\quad &&\dots,&\quad t_{F_k-\ell-2,0}&\equiv t_{F_k-1,0}\,,\\
t_{F_k-\ell-1,0}&\equiv t_{0,1},\quad &&\dots,&\quad t_{\ell,0}&\equiv t_{2\ell+1-F_k,1}\pmod{F_i}\,. 
\end{align*} 
They are valid because for $0\le j\le F_k-\ell-2$, 
\begin{align*} 
t_{j,0}&=j F_{i+2}\equiv(F_i+j)F_{i+2}\\
&=(\ell+1-j)F_{i+2}=t_{\ell+1-j,0}\pmod{F_i}\,,
\end{align*}
and for $0\le j\le 2\ell+1-F_k$, 
\begin{align*} 
t_{F_k-\ell-1+j,0}&=(F_k-\ell-1+j)F_{i+2}\\
&\equiv(j+F_k)F_{i+2}-F_{k-2}F_i=t_{j,1}\pmod{F_i}\,. 
\end{align*}

\begin{table}[htbp]
  \centering
\scalebox{0.7}{
\begin{tabular}{cccccccc} 
\cline{1-2}\cline{3-4}\cline{5-6}\cline{7-8}
\multicolumn{1}{|c}{$t_{0,0}$}&$\cdots$&$\cdots$&\multicolumn{1}{c|}{$t_{\ell,0}$}&$t_{\ell+1,0}$&$\cdots$&$\cdots$&\multicolumn{1}{c|}{\Ovalbox{$t_{2\ell+1,0}$}}\\
\cline{1-2}\cline{3-4}\cline{5-6}\cline{7-8}
\end{tabular}
} 
  \caption{${\rm Ap}(F_i;0)$ and ${\rm Ap}(F_i;1)$ for $r=0$ and $2\ell+1\le F_k-1$}
  \label{tb:g1system00}
\end{table}

If $2\ell+1\le F_k-1$, then  the second set ${\rm Ap}(F_i;1)$ can be yielded by moving to fill the last gap in the line only. 
$$
t_{0,0}\equiv t_{\ell+1,0},\quad\dots,\quad t_{\ell,0}\equiv t_{2\ell+1,0}\pmod{F_i}\,.
$$ 
They are valid because for $0\le j\le\ell$,  
\begin{align*}  
t_{j,0}&=j F_{i+2}\equiv(F_i+j)F_{i+2}\\
&=(\ell+j+1)F_{i+2}=t_{\ell+j+1,0}\pmod{F_i}\,. 
\end{align*}

Next, we shall decide the maximal element in the second set ${\rm Ap}(F_i;1)$ (and also in the first set ${\rm Ap}(F_i;0)$).  

\noindent 
{\bf Case 1(1)} Assume that $r=0$ and $2\ell+1\le F_k-1$. The second condition is equivalent to $2 F_i\le F_k$, which is equivalent to $i\le k-2$.  
The largest element in the second set ${\rm Ap}(F_i;1)$, which is congruent to $\{0,1,\dots,F_i-1\}\pmod{F_i}$, is given by $t_{2\ell+1,0}=(2 F_i-1)F_{i+2}$. 

\noindent 
{\bf Case 1(2)} Assume that $r=0$ and $2\ell+1\ge F_k$. The second condition is equivalent to $2 F_i-1\ge F_k$, which is equivalent to $i\ge k-1\ge 3$. In this case there are two possibilities for the largest element in the second set ${\rm Ap}(F_i;1)$: $t_{F_k-1,0}=(F_k-1)F_{i+2}$ or $t_{2\ell+1-F_k,1}=(2 F_i-1)F_{i+2}-F_{k-2}F_i$. However, because of $i\ge k-1\ge 3$, always $t_{F_k-1,0}<t_{2\ell+1-F_k,1}$.  

\noindent 
{\bf Case 2} Assume that $r\ge 1$. This condition is equivalent to $F_i-1\ge F_k$, which is equivalent to $i\ge k+1$. In this case there are four possibilities for the largest element in the second set ${\rm Ap}(F_i;1)$: 
\begin{align*}  
t_{2 F_k-1,r-2}&=(r F_k-1)F_{i+2}-(r-2)F_{k-2}F_i,\\  
t_{F_k+\ell,r-1}&=(F_i-1)F_{i+2}-(r-1)F_{k-2}F_i,\\
t_{F_k-1,r}&=\bigl((r+1)F_k-1\bigr)F_{i+2}-r F_{k-2}F_i,\\
t_{\ell,r+1}&=(F_i+F_k-1)F_{i+2}-(r+1)F_{k-2}F_i. 
\end{align*}
However, it is clear that $t_{2 F_k-1,r-2}<t_{F_k-1,r}$. Because of $i\ge k+1$, $t_{F_k+\ell,r-1}<t_{F_k-1,r}$. Thus, the only necessity is to compare $t_{F_k-1,r}$ and $t_{\ell,r+1}$, and $t_{F_k-1,r}>t_{\ell,r+1}$ is equivalent to $(F_i-r F_k)F_{i+2}>F_{k-2}F_i$. 

Finally, rewriting the forms in terms of $F_{i+2}$ and $F_{i+k}$ and applying Lemma \ref{cor-mp} (\ref{mp-g}), we get the result. Namely, the formula (\ref{eq:d1+2}) comes from Case 1(1). The formulas (\ref{eq:d1+1}) and (\ref{eq:d1+0}) come from Case 1(2) when $k=i+1$ and $k=i$, respectively. The general formula (\ref{eq:d1-123}) comes from Case 2.   
\end{proof}

\section{The case $p=2$}

When $p=2$, we have the following.  

\begin{theorem}  
For $i\ge 3$, we have 
\begin{align}  
g_2(F_i,F_{i+2},F_{i+k})&=(3 F_i-1)F_{i+2}-F_i\quad(k\ge i+3)\,, 
\label{eq:d2+3}\\
g_2(F_i,F_{i+2},F_{2 i+2})&=\begin{cases}
(F_{i-2}-1)F_{i+2}+F_{2 i+2}-F_i&\text{$(i$ is odd$)$}\\
(F_{i+2}-1)F_{i+2}-F_i&\text{$(i$ is even$)$}\,,
\end{cases}
\label{eq:d2+2}\\
g_2(F_i,F_{i+2},F_{2 i+1})&=(F_{i}-1)F_{i+2}+F_{2 i+1}-F_i\,, 
\label{eq:d2+1}\\
g_2(F_i,F_{i+2},F_{2 i})&=(2 F_{i}-1)F_{i+2}-F_i\,, 
\label{eq:d2+0}\\
g_2(F_i,F_{i+2},F_{2 i-1})&=\begin{cases}
(F_{i-4}-1)F_{i+2}+3 F_{2 i-1}-F_i&\text{$(i\ge 5)$}\\
F_{i+2}+2 F_{2 i-1}-F_i(=31)&\text{$(i=4)$}\,. 
\end{cases}
\label{eq:d2-1}
\end{align} 
When $r=\fl{(F_i-1)/F_k}\ge 2$, that is, $k\le i-2$, we have 
\begin{align}
&g_2(F_i,F_{i+2},F_{i+k})\notag\\
&=\begin{cases}
(F_i-r F_k-1)F_{i+2}+(r+2)F_{i+k}-F_i&\text{if $(F_i-r F_k)F_{i+2}\ge F_{k-2}F_i$};\\
(F_k-1)F_{i+2}+(r+1)F_{i+k}-F_i&\text{if $(F_i-r F_k)F_{i+2}<F_{k-2}F_i$}\,. 
\end{cases}
\label{eq:d2-234}
\end{align} 
\label{th:d2} 
\end{theorem}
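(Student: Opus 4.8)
The plan is to carry out, one level deeper, exactly the construction used in the proof of Theorem~\ref{th:d1}. I keep the same linear representation $t_{x,y}=x F_{i+2}+y F_{i+k}=(x+y F_k)F_{i+2}-y F_{k-2}F_i$ and the same pair $(r,\ell)$ defined by $F_i-1=r F_k+\ell$ with $0\le\ell\le F_k-1$, and I reuse the fundamental congruence $t_{x,y}\equiv t_{F_k+x,y-1}\pmod{F_i}$ together with the wrap-around congruences that push an element of the lowest row into the smallest unfilled rows. The set ${\rm Ap}(F_i;1)$ is already known explicitly from the proof of Theorem~\ref{th:d1} (Tables~\ref{tb:g1system}--\ref{tb:g1system00}), so the first step is simply to record its shape in each of the three regimes $r\ge 1$, $r=0$ with $2\ell+1\ge F_k$, and $r=0$ with $2\ell+1\le F_k-1$.

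The second step is to apply the very same shift-and-fill operation once more to pass from ${\rm Ap}(F_i;1)$ to ${\rm Ap}(F_i;2)$: the line of smallest $y$-coordinate is moved up to fill the last gap and spills into a new line, while every other line shifts up by one and moves one block to the right. Because this map is again governed by $t_{x,y}\equiv t_{F_k+x,y-1}$ and the wrap-around relations, it is a bijection of residue classes that replaces each representative by the next-larger one in its class; this is precisely what certifies that the output is the third-least complete residue system ${\rm Ap}(F_i;2)$. I would verify the governing congruences exactly as in the $p=1$ proof, using the identity $F_{i+k}=F_k F_{i+2}-F_{k-2}F_i$.

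In the generic regime $r=\fl{(F_i-1)/F_k}\ge 2$ (equivalently $k\le i-2$, i.e. $i\ge k+2$), the double shift pushes the extremal occupied cells one further row down than in the $p=1$ case, so the only serious candidates for the maximum become the analogues $t_{F_k-1,r+1}=\bigl((r+2)F_k-1\bigr)F_{i+2}-(r+1)F_{k-2}F_i$ and $t_{\ell,r+2}=(F_i+2F_k-1)F_{i+2}-(r+2)F_{k-2}F_i$, the remaining candidates being eliminated by $i\ge k+2$ exactly as the intermediate candidates were eliminated for $p=1$. Comparing these two reduces, after cancellation, to $t_{F_k-1,r+1}-t_{\ell,r+2}=F_{k-2}F_i-(F_i-r F_k)F_{i+2}$, so the decision rests on the same threshold $(F_i-r F_k)F_{i+2}\gtrless F_{k-2}F_i$ that governed $p=1$. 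Rewriting the winner in terms of $F_{i+2}$ and $F_{i+k}$ via $t_{x,y}=x F_{i+2}+y F_{i+k}$ and subtracting $F_i$ through Lemma~\ref{cor-mp}~(\ref{mp-g}) yields (\ref{eq:d2-234}).

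Finally I would dispatch the boundary cases $r=0$ (which covers $k\ge i$, giving (\ref{eq:d2+3})--(\ref{eq:d2+0})) and $r=1$ (which is $k=i-1$, giving (\ref{eq:d2-1})) by hand, since there the two shifts keep all elements within rows $0,1,2$ and the position of the maximum is decided by counting occupied cells against thresholds comparing small multiples of $F_i$ (equivalently of $\ell+1$) with $F_k$, rather than by the row-spilling argument of the generic case. I expect \emph{this} low-$r$ analysis to be the main obstacle: the counts must be tracked precisely, and the split by the parity of $i$ in (\ref{eq:d2+2}) signals that one such threshold becomes an equality that flips with $i\bmod 2$ (ultimately reflecting the identity $F_{2i+2}=F_{i+2}^2-F_i^2$, which determines whether the last element lands at the end of row $0$ or at a specific spot in row $1$). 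Consequently the two subcases of (\ref{eq:d2+2}) and the small-$i$ exception in (\ref{eq:d2-1}) have to be read off separately after the double shift. The routine part is checking the shift congruences and the final Fibonacci bookkeeping; the delicate part is confirming bijectivity and minimality of the double shift in these degenerate configurations, so that no intermediate representative is skipped.
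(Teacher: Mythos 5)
Your treatment of the generic case $r\ge 2$ is exactly the paper's: iterate the shift-and-fill construction to pass from ${\rm Ap}(F_i;1)$ to ${\rm Ap}(F_i;2)$, discard all but the two extremal candidates $t_{F_k-1,r+1}$ and $t_{\ell,r+2}$, and decide between them by the threshold $(F_i-rF_k)F_{i+2}\gtrless F_{k-2}F_i$; your difference computation $t_{F_k-1,r+1}-t_{\ell,r+2}=F_{k-2}F_i-(F_i-rF_k)F_{i+2}$ and the final rewriting agree with the paper's Case 3, so that part is sound and identical in approach. Your assignment of the boundary regimes ($r=0$ for $k\ge i$, $r=1$ for $k=i-1$) to the formulas (\ref{eq:d2+3})--(\ref{eq:d2-1}) is also correct.

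The concrete gap is your claim that in the boundary cases ``the two shifts keep all elements within rows $0,1,2$.'' This is true for $r=0$ (rows $0,1$ suffice) and for the exceptional case $i=4$, $k=3$, but it fails precisely in the case $r=1$, $k=i-1$, $i\ge 5$. There $\ell=F_{i-2}-1$, so $2\ell+1-F_k=F_{i-4}-1\ge 0$, and when the leftover of the lowest row of ${\rm Ap}(F_i;1)$ wraps around it overflows past the end of row $2$ into row $3$: the third Ap\'ery set is $\{t_{F_k+\ell+1,0},\dots,t_{2F_k-1,0}\}\cup\{t_{F_k,1},\dots,t_{F_k+\ell,1}\}\cup\{t_{\ell+1,2},\dots,t_{F_k-1,2}\}\cup\{t_{0,3},\dots,t_{2\ell+1-F_k,3}\}$, and its maximal element is $t_{2\ell+1-F_k,3}$, which lies in row $3$. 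This is exactly why (\ref{eq:d2-1}) carries the coefficient $3$ on $F_{2i-1}$ for $i\ge 5$. If you literally search only rows $0,1,2$ you would return $\max\bigl(t_{2F_k-1,0},t_{F_k+\ell,1},t_{F_k-1,2}\bigr)$, which is strictly smaller than the true maximum, and obtain a wrong value of $g_2$; so the $r=1$ analysis must be redone allowing the spill into row $3$ (the paper's Case 2(2)). A smaller inaccuracy: the parity split in (\ref{eq:d2+2}) is not created by $F_{2i+2}=F_{i+2}^2-F_i^2$ (that identity only rewrites the answer); it comes from Catalan's identity $F_i^2-F_{i-2}F_{i+2}=(-1)^i$ applied to the threshold, since for $k=i+2$ the comparison $(3F_i-F_k)F_{i+2}\gtrless F_{k-2}F_i$ becomes $F_{i-2}F_{i+2}\gtrless F_i^2$, and the sign of the difference flips with the parity of $i$.
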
  

\noindent 
{\it Remark.}  
When $k=i-2$ and $k=i-3$, we can write this more explicitly as 
\begin{align}
g_2(F_i,F_{i+2},F_{2 i-2})&=(F_{i-3}-1)F_{i+2}+4 F_{2 i-2}-F_i\quad(i\ge 5)\,,
\label{eq:d2-2}\\
g_2(F_i,F_{i+2},F_{2 i-3})&=\begin{cases}
(F_{i-6}-1)F_{i+2}+6 F_{2 i-3}-F_i&\text{$(i\ge 7)$}\\
F_{i+2}+5 F_{2 i-3}-F_i(=183)&\text{$(i=6)$}\,, 
\end{cases} 
\label{eq:d2-3}
\end{align} 
respectively. The formulas (\ref{eq:d2-2}) and (\ref{eq:d2-3}) hold when $r=2$ and $r=4$, respectively.

\begin{proof}
When $p=2$, the third least complete residue system ${\rm Ap}(F_i;2)$ is determined from the second least complete residue system ${\rm Ap}(F_i;1)$. When $r\ge 2$, some elements go to the third block.  
 
\begin{table}[htbp]
  \centering
\scalebox{1.0}{
\begin{tabular}{ccc}
\multicolumn{1}{|c}{1st block}&\multicolumn{1}{|c|}{2nd block}&\multicolumn{1}{c|}{3rd block}\\
\cline{1-2}\cline{2-3}
\multicolumn{1}{|c}{$\underbrace{\phantom{the 1st block}}_{F_k}$}&\multicolumn{1}{|c|}{$\underbrace{\phantom{the 1st block}}_{F_k}$}&\multicolumn{1}{c|}{$\underbrace{\phantom{the 1st block}}_{F_k}$}
\end{tabular}
} 
\end{table}

\begin{table}[htbp]
  \centering
\scalebox{0.7}{
\begin{tabular}{ccccccc}
&&&&&&\multicolumn{1}{c|}{}\\ 
\cline{1-2}\cline{3-4}\cline{5-6}
\multicolumn{1}{|c}{}&\multicolumn{1}{c|}{\phantom{$t_{3\ell+2,0}$}}&&\multicolumn{1}{c|}{\phantom{$t_{3\ell+2,0}$}}&&\multicolumn{1}{c|}{$t_{3\ell+2,0}$}&\multicolumn{1}{c|}{}\\ 
\cline{1-2}\cline{3-4}\cline{5-6}
&&&&&&\multicolumn{1}{c|}{}
\end{tabular}
} 
  \caption{${\rm Ap}(F_i;p)$ ($p=0,1,2$) for $r=0$ and $F_k\ge 3\ell+3$}
  \label{tb:g2system200}
\end{table}

\noindent 
{\bf Case 1(1)} 
Let $r=0$ and $F_k\ge 3\ell+3=3 F_i$. Since for $\ell+1\le j\le 2\ell+1$, 
\begin{align*}
t_{j,0}&=j F_{i+2}\equiv(F_i+j)F_{i+2}\\
&=(\ell+j+1)F_{i+2}=t_{\ell+j+1,0}\pmod{F_i}\,, 
\end{align*} 
the third set ${\rm Ap}(F_i;2)$ is given by 
$$
\{t_{2\ell+1,0},\dots,t_{3\ell+2,0}\}\pmod{F_i}\,. 
$$ 
As the maximal element is $t_{3\ell+2,0}$, by (\ref{mp-g}), we have 
$$
g_2(F_i,F_{i+2},F_{i+k})=(3 F_i-1)F_{i+2}-F_i\,. 
$$ 

\begin{table}[htbp]
  \centering
\scalebox{0.7}{
\begin{tabular}{ccccccc}
&&&&&&\multicolumn{1}{c|}{}\\ 
\cline{1-2}\cline{3-4}\cline{5-6}\cline{6-7}
\multicolumn{1}{|c}{}&&\multicolumn{1}{c|}{\phantom{$t_{F_k-1,0}$}}&&&\multicolumn{1}{c|}{\phantom{$t_{F_k-1,0}$}}&\multicolumn{1}{c|}{$t_{F_k-1,0}$}\\ 
\cline{1-2}\cline{3-4}\cline{5-6}\cline{6-7}
\multicolumn{1}{|c}{}&\multicolumn{1}{c|}{$t_{3\ell+2-F_k,1}$}&&&&&\multicolumn{1}{c|}{}\\ 
\cline{1-2}
&&&&&&\multicolumn{1}{c|}{}
\end{tabular}
} 
  \caption{${\rm Ap}(F_i;p)$ ($p=0,1,2$) for $r=0$ and $2\ell+2\le F_k\le 3\ell+2$}
  \label{tb:g2system201}
\end{table}

\noindent 
{\bf Case 1(2)} 
Let $r=0$ and $2 F_i=2\ell+2\le F_k\le 3\ell+2=3 F_i-1$. 
Since $t_{j,0}\equiv t_{\ell+j+1,0}\pmod{F_i}$ ($\ell+1\le j\le F_k-\ell-2$), and for $0\le j\le 3\ell+2-F_k$, 
\begin{align*}   
t_{F_k-\ell-1+j,0}&=(F_k-\ell-1+j)F_{i+2}=(F_k-F_i+j)F_{i+2}\\
&\equiv(F_k+j)F_{i+2}\equiv(j+F_k)F_{i+2}+F_{k-2}F_i=t_{j,1}\pmod{F_i}\,,
\end{align*} 
the third set ${\rm Ap}(F_i;2)$ is    
$$
\{t_{2\ell+1,0},\dots,t_{F_k-1,0},t_{0,1},\dots,t_{3\ell+2-F_k,1}\}\pmod{F_i}\,.  
$$ 
The first elements $t_{2\ell+1,0},\dots,t_{F_k-1,0}$ are in the last of the first line, and the last elements $t_{0,1},,\dots,t_{3\ell+2-F_k,1}$ are in the first part of the second line. 
Hence, the maximal element is $t_{F_k-1,0}=(F_k-1)F_{i+2}$ or $t_{3\ell+2-F_k,1}=(3 F_i-1)F_{i+2}-F_{k-2}F_i$. 
Therefore, when $(3 F_i-F_k)F_{i+2}\ge F_{k-2}F_i$, $g_2(F_i,F_{i+2},F_{i+k})=(3 F_i-1)F_{i+2}-(F_{k-2}+1)F_i$. When $(3 F_i-F_k)F_{i+2}<F_{k-2}F_i$, $g_2(F_i,F_{i+2},F_{i+k})=(F_k-1)F_{i+2}-F_i$. 

\begin{table}[htbp]
  \centering
\scalebox{0.7}{
\begin{tabular}{cccccccccc}
&&&&&&\multicolumn{1}{c|}{}&&&\\  
\cline{1-2}\cline{3-4}\cline{5-6}\cline{7-8}\cline{9-10}
\multicolumn{1}{|c}{}&&&&\multicolumn{1}{c|}{}&&\multicolumn{1}{c|}{\phantom{$t_{\ell,1}$}}&&&\multicolumn{1}{c|}{$t_{2\ell+1,0}$}\\ 
\cline{1-2}\cline{3-4}\cline{5-6}\cline{7-8}\cline{9-10}
\multicolumn{1}{|c}{}&&\multicolumn{1}{c|}{\phantom{$t{2\ell+1,0}$}}&&\multicolumn{1}{c|}{$t_{\ell,1}$}&&\multicolumn{1}{c|}{}&&&\\   
\cline{1-2}\cline{3-4}\cline{4-5}
&&&&&&\multicolumn{1}{c|}{}&&&
\end{tabular}
} 
  \caption{${\rm Ap}(F_i;p)$ ($p=0,1,2$) for $r=0$ and $F_k\le 2\ell+1$}
  \label{tb:g2system202}
\end{table}

\noindent 
{\bf Case 1(3)} 
Let $r=0$ and $F_k\le 2\ell+1=2 F_i-1$. Since for $\ell+1\le j\le F_k-1$ 
\begin{align*}  
t_{j,0}&=j F_{i+2}\equiv(F_i+j)F_{i+2}\\
&\equiv(\ell+1+j)F_{i+2}-F_{k-2}F_i=t_{\ell-F_k+1+j,1}\pmod{F_i}\,, 
\end{align*} 
and for $0\le j\le 2\ell+1-F_k$
\begin{align*}  
t_{j,1}&=(j+F_k)F_{i-2}-F_{k-2}F_i\\
&\equiv(F_k+j)F_{i+2}=t_{F_k+j,0}\pmod{F_i}\,, 
\end{align*}
the third set ${\rm Ap}(F_i;2)$ is    
$$
\{t_{2\ell+2-F_k,1},\dots,t_{\ell,1},t_{F_k,0},\dots,t_{2\ell+1,0}\}\pmod{F_i}\,.
$$ 
The first elements $t_{2\ell+2-F_k,1},\dots,t_{\ell,1}$ are in the second line of the first block, and the last elements $t_{F_k,0},\dots,t_{2\ell+1,0}$ in the first line of the second block. 
So, the maximal element is $t_{\ell,1}=(F_i+F_k-1)F_{i+2}-F_{k-2}F_i$ or $t_{2\ell+1,0}=(2 F_i-1)F_{i+2}$. Since $F_k\le 2 F_i-1$, only when $k=i+1$, we have $g_2(F_i,F_{i+2},F_{i+k})=(F_i+F_k-1)F_{i+2}-(F_{k-2}+1)F_i$. When $k\le i$, we have $g_2(F_i,F_{i+2},F_{i+k})=(2 F_i-1)F_{i+2}-F_i$.

\begin{table}[htbp]
  \centering
\scalebox{0.7}{
\begin{tabular}{cccccccccccccc}
&&&&&&\multicolumn{1}{c|}{}&&&&&&&\multicolumn{1}{c|}{}\\
\cline{1-2}\cline{3-4}\cline{5-6}\cline{7-8}\cline{9-10}\cline{11-12}\cline{13-14}
\multicolumn{1}{|c}{}&&&&&&\multicolumn{1}{c|}{}&&\multicolumn{1}{c|}{\phantom{$t_{2\ell+1,2}$}}&&&&&\multicolumn{1}{c|}{$t_{2 F_k-1,0}$}\\
\cline{3-4}\cline{5-6}\cline{7-8}\cline{9-10}\cline{11-12}\cline{13-14}
\multicolumn{1}{|c}{}&\multicolumn{1}{c|}{}&&&&&\multicolumn{1}{c|}{}&&\multicolumn{1}{c|}{$t_{F_k+\ell,1}$}&&&&&\multicolumn{1}{c|}{}\\
\cline{1-2}\cline{3-4}\cline{5-6}\cline{7-8}\cline{8-9}
\multicolumn{1}{|c}{}&\multicolumn{1}{c|}{\phantom{$t_{F_k+\ell,1}$}}&&\multicolumn{1}{c|}{$t_{2\ell+1,2}$}&&&\multicolumn{1}{c|}{}&&&&&&&\multicolumn{1}{c|}{}\\
\cline{1-2}\cline{3-4}
&&&&&&\multicolumn{1}{c|}{}&&&&&&&\multicolumn{1}{c|}{}
\end{tabular}
} 
  \caption{${\rm Ap}(F_i;p)$ ($p=0,1,2$) for $r=1$ and $F_k\ge 2\ell+2$}
  \label{tb:g2system210}
\end{table}

\noindent 
{\bf Case 2(1)} 
Let $r=1$ and $2\ell+2\le F_k$, that is, $\frac{2}{3}F_i\le F_k\le F_i-1$. This case happens only when $i=4$ and $k=3$. 
Since for $\ell+1\le j\le F_k-1$ 
\begin{align*}
t_{j,1}&=(j+F_k)F_{i+2}-F_{k-2}F_i\\ 
&\equiv(F_k+j)F_{i+2}=t_{F_k+j,0}\pmod{F_i}\,,
\end{align*}
for $0\le j\le\ell$ 
\begin{align*}
t_{j,2}&=(j+2 F_k)F_{i+2}-2 F_{k-2}F_i\\ 
&\equiv(F_k+j+F_k)F_{i+2}-F_{k-2}F_i=t_{F_k+j,1}\pmod{F_i}\,,
\end{align*}
and for $0\le j\le\ell$ 
\begin{align*}
t_{F_k+j,0}&=(F_k+j)F_{i+2}\equiv(F_i+j+F_k)F_{i+2}\\ 
&\equiv(\ell+1+j+2 F_k)F_{i+2}-2 F_{k-2}F_i=t_{\ell+1+j,2}\pmod{F_i}\,,
\end{align*}
the third set ${\rm Ap}(F_i;2)$ is    
$$
\{t_{F_k+\ell+1,0},\dots,t_{2 F_k-1,0},t_{F_k,1},\dots,t_{F_k+\ell,1},t_{\ell+1,2},\dots,t_{2\ell+1,2}\}\pmod{F_i}\,.
$$ 
The first elements $t_{F_k+\ell+1,0},\dots,t_{2 F_k-1,0}$ are in the first line of the second block, the second elements $t_{F_k,1},\dots,t_{F_k+\ell,1}$ are in the second line of the second block, and the last elements $t_{\ell+1,2},\dots,t_{2\ell+1,2}$ are in the third line of the first block. 
So, the maximal element is one of $t_{2 F_k-1,0}=(2 F_k-1)F_{i+2}$, $t_{F_k+\ell,1}=(F_i+F_k-1)F_{i+2}-F_{k-2}F_i$ or $t_{2\ell+1,2}=(2 F_i-1)F_{i+2}-2 F_{k-2}F_i$. 
As $i=4$ and $k=3$, $t_{2\ell+1,2}=34$ is the largest. Hence, $g_2(F_i,F_{i+2},F_{i+k})=(2 F_i-1)F_{i+2}-(2 F_{k-2}+1)F_i$, that is, $g_2(F_4,F_6,F_7)=34-F_4=31$.  

\begin{table}[htbp]
  \centering
\scalebox{0.7}{
\begin{tabular}{cccccccccccccc}
&&&&&&\multicolumn{1}{c|}{}&&&&&&&\multicolumn{1}{c|}{}\\
\cline{1-2}\cline{3-4}\cline{5-6}\cline{7-8}\cline{9-10}\cline{11-12}\cline{13-14}
\multicolumn{1}{|c}{}&&&&&&\multicolumn{1}{c|}{}&&&&&\multicolumn{1}{c|}{}&&\multicolumn{1}{c|}{$t_{2 F_k-1,0}$}\\
\cline{6-7}\cline{7-8}\cline{9-10}\cline{11-12}\cline{13-14}
\multicolumn{1}{|c}{}&&&&\multicolumn{1}{c|}{}&&\multicolumn{1}{c|}{}&&&&&\multicolumn{1}{c|}{\phantom{$t_{F}$}$t_{F_k+\ell,1}$}&&\multicolumn{1}{c|}{}\\
\cline{1-2}\cline{3-4}\cline{5-6}\cline{7-8}\cline{9-10}\cline{11-12}
\multicolumn{1}{|c}{}&&&&\multicolumn{1}{c|}{}&&\multicolumn{1}{c|}{$t_{F_k-1,2}$}&&&&&&&\multicolumn{1}{c|}{}\\
\cline{1-2}\cline{3-4}\cline{5-6}\cline{6-7}
\multicolumn{1}{|c}{}&\multicolumn{1}{c|}{$t_{2\ell+1-F_k,3}$}&&&&&\multicolumn{1}{c|}{}&&&&&&&\multicolumn{1}{c|}{}\\
\cline{1-2}
&&&&&&\multicolumn{1}{c|}{}&&&&&&&\multicolumn{1}{c|}{}
\end{tabular}
} 
  \caption{${\rm Ap}(F_i;p)$ ($p=0,1,2$) for $r=1$ and $F_k\le 2\ell+1$}
  \label{tb:g2system211}
\end{table}

\noindent 
{\bf Case 2(2)} 
Let $r=1$ and $2\ell+1\ge F_k$, that is, $(F_i-1)/2<F_k\le(2F_i-1)/3$. This relation holds only when $k=i-1\ge 4$.  
Since $t_{j,1}\equiv t_{F_k+j,0}\pmod{F_i}$ ($\ell+1\le j\le F_k-1$), $t_{j,2}\equiv t_{F_k+j,1}\pmod{F_i}$ ($0\le j\le\ell$), $t_{F_k+j,0}\equiv t_{\ell+1+j,2}\pmod{F_i}$ ($0\le j\le F_k-\ell-2$), and for $0\le j\le 2\ell+1-F_k$ 
\begin{align*}
t_{2 F_k-\ell-1+j,0}&=(2 F_k-\ell-1+j)F_{i+2}\equiv(3 F_k-F_i+j)F_{i+2}\\ 
&\equiv(j+3 F_k)F_{i+2}-3 F_{k-2}F_i=t_{j,3}\pmod{F_i}\,,
\end{align*}
the third set ${\rm Ap}(F_i;2)$ is  
\begin{multline*}
\{t_{F_k+\ell+1,0},\dots,t_{2 F_k-1,0},t_{F_k,1},\dots,t_{F_k+\ell,1},\\
t_{\ell+1,2},\dots,t_{F_k-1,2},t_{0,3},\dots,t_{2\ell+1-F_k,3}\}\pmod{F_i}\,.
\end{multline*}
So, the maximal element is one of $t_{2 F_k-1,0}=(2 F_k-1)F_{i+2}$, $t_{F_k+\ell,1}=(F_i+F_k-1)F_{i+2}-F_{k-2}F_i$, $t_{F_k-1,2}=(3 F_k-1)F_{i+2}-2 F_{k-2}F_i$ or $t_{2\ell+1-F_k,3}=(2 F_i-1)F_{i+2}-3 F_{k-2}F_i$. 
As $k=i-1\ge 4$, $t_{2\ell+1-F_k,3}=(2 F_i-1)F_{i+2}-3 F_{i-3}F_i$ is the largest. Hence, $g_2(F_i,F_{i+2},F_{2 i-1})=(2 F_i-1)F_{i+2}-(3 F_{i-3}+1)F_i$.

\begin{table}[htbp]
  \centering
\scalebox{0.7}{
\begin{tabular}{ccccccccccccccc}
&&&&\multicolumn{1}{c|}{}&&&&&\multicolumn{1}{c|}{}&&&&&\multicolumn{1}{c|}{}\\ 
\cline{1-2}\cline{3-4}\cline{5-6}\cline{7-8}\cline{9-10}\cline{11-12}\cline{13-14}\cline{14-15}
\multicolumn{1}{|c}{}&&&&\multicolumn{1}{c|}{}&&&&&\multicolumn{1}{c|}{}&&&&&\multicolumn{1}{c|}{}\\ 
\multicolumn{1}{|c}{}&&&&\multicolumn{1}{c|}{}&&&&&\multicolumn{1}{c|}{}&&&&&\multicolumn{1}{c|}{$t_{3 F_k-1,r-3}$}\\ 
\cline{12-13}\cline{14-15}
\multicolumn{1}{|c}{}&&&&\multicolumn{1}{c|}{}&&&&&\multicolumn{1}{c|}{}&\multicolumn{1}{c|}{$t_{2 F_k+\ell,r-2}$}&&&&\multicolumn{1}{c|}{}\\ 
\cline{7-8}\cline{9-10}\cline{10-11}
\multicolumn{1}{|c}{}&&&&\multicolumn{1}{c|}{}&\multicolumn{1}{c|}{}&&&&\multicolumn{1}{c|}{$t_{2 F_k-1,r-1}$}&&&&&\multicolumn{1}{c|}{}\\ 
\cline{3-4}\cline{5-6}\cline{7-8}\cline{9-10}
\multicolumn{1}{|c}{}&\multicolumn{1}{c|}{}&&&\multicolumn{1}{c|}{}&\multicolumn{1}{c|}{\phantom{$t_F$}$t_{F_k+\ell,r}$}&&&&\multicolumn{1}{c|}{}&&&&&\multicolumn{1}{c|}{}\\ 
\cline{1-2}\cline{3-4}\cline{5-6}
\multicolumn{1}{|c}{}&\multicolumn{1}{c|}{}&&&\multicolumn{1}{c|}{$t_{F_k-1,r+1}$}&&&&&\multicolumn{1}{c|}{}&&&&&\multicolumn{1}{c|}{}\\ 
\cline{1-2}\cline{3-4}\cline{4-5}
\multicolumn{1}{|c}{}&\multicolumn{1}{c|}{\phantom{$t_F$}$t_{\ell,r+2}$}&&&\multicolumn{1}{c|}{}&&&&&\multicolumn{1}{c|}{}&&&&&\multicolumn{1}{c|}{}\\ 
\cline{1-2}
&&&&\multicolumn{1}{c|}{}&&&&&\multicolumn{1}{c|}{}&&&&&\multicolumn{1}{c|}{}
\end{tabular}
} 
  \caption{${\rm Ap}(F_i;p)$ ($p=0,1,2$) for $r\ge 2$}
  \label{tb:g2system300}
\end{table}

\noindent 
{\bf Case 3}  
Let $r\ge 2$. 
The part 
$$ 
t_{F_k,1},\dots,t_{2 F_k-1,1},\dots,t_{F_k,r-2},\dots,t_{2 F_k-1,r-2},t_{F_k,r-1},\dots,t_{F_k+\ell,r-1}
$$ 
in the second block among the second set ${\rm Ap}(F_i;1)$ corresponds to the part 
$$
t_{2 F_k,0},\dots,t_{3 F_k-1,0},\dots,t_{2 F_k,r-3},\dots,t_{3 F_k-1,r-3},t_{2 F_k,r-2},\dots,t_{2 F_k+\ell,r-2}
$$ 
in the third block among the third least set ${\rm Ap}(F_i;2)$\footnote{When $r=2$, only the last shorter line remains, and $t_{3 F_k-1,r-3}$ in table \ref{tb:g2system300} does not appear. However, this does not affect the result.} 
because 
\begin{align*}
t_{F_k+j,h}&=(F_k+j+h F_k)F_{i+2}-h F_{k-2}F_i\\
&\equiv\bigl(2 F_k+j+(h-1)F_k\bigr)F_{i+2}-(h-1)F_{k-2}F_i\\
&=t_{2 F_k+j,h-1}\pmod{F_i}
\end{align*}
for $0\le j\le F_k-1$ and $1\le h\le r-2$ or $0\le j\le\ell$ and $h=r-1$.  
The part $t_{\ell+1,r},\dots,t_{F_k-1,r}$ in the first block among the second set ${\rm Ap}(F_i;1)$ corresponds to the part $t_{F_k+\ell+1,r-1},\dots,t_{2 F_k-1,r-1}$ in the second block among the third set ${\rm Ap}(F_i;2)$ because for $\ell+1\le j\le F_k-1$ 
\begin{align*}
t_{j,r}&=(j+r F_k)F_{i+2}-r F_{k-2}F_i\\
&\equiv\bigl(F_k+j+(r-1)F_k\bigr)F_{i+2}-(r-1)F_{k-2}F_i\\
&=t_{F_k+j,r-1}\pmod{F_i}\,.  
\end{align*} 
The part $t_{0,r+1},\dots,t_{\ell,r+1}$ in the first block among the second set ${\rm Ap}(F_i;1)$ corresponds to the part $t_{F_k,r},\dots,t_{F_k+\ell,r}$ in the second block among the third set ${\rm Ap}(F_i;2)$ because for $0\le j\le\ell$ 
\begin{align*}
t_{j,r+1}&=\bigl(j+(r+1)F_k\bigr)F_{i+2}-(r+1)F_{k-2}F_i\\
&\equiv(F_k+j+r F_k)F_{i+2}-r F_{k-2}F_i\\
&=t_{F_k+j,r}\pmod{F_i}\,.  
\end{align*} 
The first line $t_{F_k,0},\dots,t_{2 F_k-\ell-2},t_{2 F_k-\ell-1},\dots,t_{2 F_k-1,0}$ in the second block among the second set ${\rm Ap}(F_i;1)$ corresponds to two parts $t_{\ell+1,r+1},\dots,t_{F_k-1,r+1}$ and $t_{0,r+2},\dots,t_{\ell,r+2}$ in the first block among the third set ${\rm Ap}(F_i;2)$ because for $0\le j\le F_k-\ell-2$
\begin{align*}
t_{F_k+j,0}&=(F_k+j)F_{i+2}\equiv(F_i+F_k+j)F_{i+2}-(r+1)F_{k-2}F_i\\
&=\bigl(\ell+1+j+(r+1)F_k\bigr)F_{i+2}-(r+1)F_{k-2}F_i\\
&=t_{\ell+1+j,r+1}\pmod{F_i}\,,   
\end{align*} 
and for $0\le j\le\ell$, 
\begin{align*}
t_{2 F_k-\ell-1+j,0}&=(2 F_k-\ell-1+j)F_{i+2}=(2 F_k-F_i+r F_k+j)F_{i+2}\\
&\equiv\bigl(j+(r+2)F_k\bigr)F_{i+2}-(r+2)F_{k-2}F_i\\
&=t_{j,r+2}\pmod{F_i}\,.   
\end{align*}   
Hence, the third set ${\rm Ap}(F_i;2)$ is given by 
\begin{multline*}
\{t_{2 F_k,0},\dots,t_{3 F_k-1,0},\dots,t_{2 F_k,r-3},\dots,t_{3 F_k-1,r-3},t_{2 F_k,r-2},\dots,t_{2 F_k+\ell,r-2},\\
t_{F_k+\ell+1,r-1},\dots,t_{2 F_k-1,r-1},t_{F_k,r},\dots,t_{F_k+\ell,r},\\
t_{\ell+1,r+1},\dots,t_{F_k-1,r+1},t_{0,r+2},\dots,t_{\ell,r+2}\}\pmod{F_i}\,.
\end{multline*}
There are six candidates for the maximal element: 
\begin{align*}
t_{3 F_k-1,r-3}&=(r F_k-1)F_{i+2}-(r-3)F_{k-2}F_i\,,\\
t_{2 F_k+\ell,r-2}&=(F_i-1)F_{i+2}-(r-2)F_{k-2}F_i\,,\\
t_{2 F_k-1,r-1}&=\bigl((r+1)F_k-1\bigr)F_{i+2}-(r-1)F_{k-2}F_i\,,\\
t_{F_k+\ell,r}&=(F_i+F_k-1)F_{i+2}-r F_{k-2}F_i\,,\\
t_{F_k-1,r+1}&=\bigl((r+2)F_k-1\bigr)F_{i+2}-(r+1)F_{k-2}F_i\,,\\
t_{\ell,r+2}&=(F_i+2 F_k-1)F_{i+2}-(r+2)F_{k-2}F_i\,. 
\end{align*} 
However, it is easy to see that the first four values are less than the last two. 
Hence, if $(F_i-r F_k)F_{i+2}\ge F_{k-2}F_i$, then $g_2(F_i,F_{i+2},F_{i+k})=t_{\ell,r+2}-F_i=(F_i+2 F_k-1)F_{i+2}-\bigl((r+2)F_{k-2}+1\bigr)F_i$. If $(F_i-r F_k)F_{i+2}<F_{k-2}F_i$, then $g_2(F_i,F_{i+2},F_{i+k})=t_{F_k-1,r+1}-F_i=\bigl((r+2)F_k-1\bigr)F_{i+2}-\bigl((r+1)+1\bigr)F_{k-2}F_i$. 

Finally we rewrite the form as the linear combination of $F_{i+2}$ and $F_{i+k}$ and apply Lemma \ref{cor-mp} (\ref{mp-g}). 
The formula (\ref{eq:d2+3}) comes from Case 1(1). The formula (\ref{eq:d2+2}) comes from Case 1(2). The formulas (\ref{eq:d2+1}) and (\ref{eq:d2+0}) come from Case 1(3). The formula (\ref{eq:d2-1}) comes from Case 2(1)(2). The general formula (\ref{eq:d2-234}) comes from Case 3. 
\end{proof}

\section{The case $p=3$} 

When $p=3$, we have the following.  

\begin{theorem}  
For $i\ge 3$, we have 
\begin{align}  
g_3(F_i,F_{i+2},F_{i+k})&=(4 F_i-1)F_{i+2}-F_i\quad(k\ge i+3)\,, 
\label{eq:d3+3}\\
g_3(F_i,F_{i+2},F_{2 i+2})&=(F_{i}-1)F_{i+2}+F_{2 i+2}-F_i\,, 
\notag\\
g_3(F_i,F_{i+2},F_{2 i+1})&=(F_{i}+F_{i-2}-1)F_{i+2}+F_{2 i+1}-F_i\,, 
\notag\\
g_3(F_i,F_{i+2},F_{2 i})&=(F_{i}-1)F_{i+2}+2 F_{2 i}-F_i\,, 
\notag\\
g_3(F_i,F_{i+2},F_{2 i-1})&=(F_{i-2}-1)F_{i+2}+3 F_{2 i-1}-F_i\quad(i\ge 4)\,, 
\notag\\
g_3(F_i,F_{i+2},F_{2 i-2})&=\begin{cases}
(F_{i-5}-1)F_{i+2}+5 F_{2 i-2}-F_i&\text{$(i\ge 6)$}\\
F_{i+2}+4 F_{2 i-2}-F_i(=92)&\text{$(i=5)$}\,. 
\end{cases}
\label{eq:d3-2}
\end{align} 
When $r=\fl{(F_i-1)/F_k}\ge 3$, that is, $k\le i-3$, we have 
\begin{align} 
&g_3(F_i,F_{i+2},F_{i+k})\notag\\
&=\begin{cases}
(F_i-r F_k-1)F_{i+2}+(r+3)F_{i+k}-F_i&\text{if $(F_i-r F_k)F_{i+2}\ge F_{k-2}F_i$};\\
(F_k-1)F_{i+2}+(r+2)F_{i+k}-F_i&\text{if $(F_i-r F_k)F_{i+2}<F_{k-2}F_i$}\,. 
\end{cases}
\label{eq:d3-345}
\end{align}
\label{th:d3} 
\end{theorem}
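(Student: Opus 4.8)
The plan is to reproduce, one level higher, the block-shifting argument used for Theorems \ref{th:d1} and \ref{th:d2}, now constructing the fourth least complete residue system ${\rm Ap}(F_i;3)$ out of ${\rm Ap}(F_i;2)$. As before I would write
$$
t_{x,y} = x F_{i+2} + y F_{i+k} = (x + y F_k)F_{i+2} - y F_{k-2}F_i \quad (x,y \ge 0),
$$
and fix $r,\ell$ by $F_i - 1 = rF_k + \ell$ with $0 \le \ell \le F_k - 1$. The driving relation is again
$$
t_{x,y} \equiv t_{F_k+x,\,y-1} \pmod{F_i},
$$
valid because $t_{x,y}$ depends on $x,y$ only through $x + yF_k \pmod{F_i}$. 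Each increment of $p$ migrates one layer of representatives a full block (of width $F_k$) to the right, so ${\rm Ap}(F_i;3)$ is obtained from ${\rm Ap}(F_i;2)$ by exactly one more such migration; when $r \ge 3$ a fourth block becomes occupied.

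I would organize the proof by the value of $r$. When $r = 0$ (that is, $k \ge i$) the representatives stay within the first few blocks, and one splits into the subcases $k \ge i+3$, $k = i+2$, $k = i+1$, $k = i$, mirroring the three subcases of the $p=2$ proof but carrying one extra layer. The case $k \ge i+3$ (equivalently $F_k \ge 4F_i$, since $F_{i+3} = 3F_i + 2F_{i-1} \ge 4F_i$ while $F_{i+2} < 3F_i$) keeps all of ${\rm Ap}(F_i;3)$ in the first block with maximal element $t_{4F_i-1,0} = (4F_i-1)F_{i+2}$, giving (\ref{eq:d3+3}); the cases $k = i+2,i+1,i$ are settled by locating the rightmost occupied cell among the at most four rows, yielding the displayed formulas for $g_3(F_i,F_{i+2},F_{2i+2})$, $g_3(F_i,F_{i+2},F_{2i+1})$, $g_3(F_i,F_{i+2},F_{2i})$. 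The borderline values $k = i-1$ (forcing $r=1$) and $k = i-2$ (forcing $r=2$) are handled by direct inspection of the few occupied rows, producing $g_3(F_i,F_{i+2},F_{2i-1})$ and (\ref{eq:d3-2}), including the small-$i$ boundary exception ($i=5$).

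For the general case $r \ge 3$ (i.e. $k \le i-3$) I would track the migrating layers exactly as in Case 3 of the $p=2$ proof, advanced one further block: the bulk of the second and third blocks slides right, the wrap-around rows reattach at the bottom of the first block, and a fourth block fills in. Reading off the right endpoints of the resulting partial rows produces eight candidate maxima; six of them are dominated, leaving the two genuine competitors
$$
t_{F_k-1,\,r+2} = \bigl((r+3)F_k - 1\bigr)F_{i+2} - (r+2)F_{k-2}F_i, \qquad t_{\ell,\,r+3} = (F_i + 3F_k - 1)F_{i+2} - (r+3)F_{k-2}F_i.
$$
Since $t_{\ell,r+3} - t_{F_k-1,r+2} = (F_i - rF_k)F_{i+2} - F_{k-2}F_i$, the comparison is governed precisely by the sign of $(F_i - rF_k)F_{i+2} - F_{k-2}F_i$; rewriting both candidates in the $F_{i+2},F_{i+k}$ basis and applying Lemma \ref{cor-mp} (\ref{mp-g}) then gives the dichotomy (\ref{eq:d3-345}).

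The main obstacle I anticipate is the combinatorial bookkeeping in the general case: with three migrated layers the set ${\rm Ap}(F_i;3)$ now spans four blocks and up to $r+4$ rows, so one must enumerate the partial-row endpoints carefully and prove that the six interior candidates (the analogues of the dominated $t_{3F_k-1,r-3}$, $t_{2F_k+\ell,r-2}$ of the $p=2$ case, now shifted by one further migration) always fall below the final two. This domination rests on Fibonacci inequalities of the $i \ge k+1$ type already used for $p=2$, but because the wrap-around now occupies two extra rows the verifications must be redone rather than merely quoted. The degenerate subcases for small $r$ (where the fourth-block row $t_{4F_k-1,r-4}$ fails to appear) and for small $i$ (where rows vanish or coincide) likewise demand separate checking.
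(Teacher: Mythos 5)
Your proposal follows essentially the same route as the paper's own proof: the same $t_{x,y}$ parametrization and block-migration construction of ${\rm Ap}(F_i;3)$ from ${\rm Ap}(F_i;2)$, the same two surviving candidates $t_{F_k-1,r+2}$ and $t_{\ell,r+3}$ compared via the sign of $(F_i-rF_k)F_{i+2}-F_{k-2}F_i$, and the same final application of Lemma \ref{cor-mp} (\ref{mp-g}); your difference computation and the rewriting into the $F_{i+2},F_{i+k}$ basis are correct. The only divergence is one of completeness, not of method: the paper actually omits the $r=0,1,2$ cases (citing similarity to Theorem \ref{th:d2}) and settles the $r\ge 3$ case by the block-correspondence table, whereas you propose to write those low-$r$ cases out explicitly.
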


\begin{proof}  
When $p=3$, the fourth least complete residue system ${\rm Ap}(F_i;3)$ is determined from the third least complete residue system ${\rm Ap}(F_i;2)$. When $r\ge 3$, some elements go to the fourth block. The proof of the cases $r=0,1,2$ is similar to that of Theorem \ref{th:d2} and needs more case-by-case discussions, and it is omitted.

\begin{table}[htbp]
  \centering
\scalebox{0.7}{
\begin{tabular}{cccccccccccccccc}
\multicolumn{1}{|c}{}&&&\multicolumn{1}{c|}{}&&&&\multicolumn{1}{c|}{}&&&&\multicolumn{1}{c|}{}&&&&\multicolumn{1}{c|}{}\\ 
\cline{1-2}\cline{3-4}\cline{5-6}\cline{7-8}\cline{9-10}\cline{11-12}\cline{13-14}\cline{15-16}
\multicolumn{1}{|c}{}&&&\multicolumn{1}{c|}{}&&&&\multicolumn{1}{c|}{}&&&&\multicolumn{1}{c|}{}&&&&\multicolumn{1}{c|}{}\\ 
\multicolumn{1}{|c}{}&&&\multicolumn{1}{c|}{}&&&&\multicolumn{1}{c|}{}&&$\ctext{3}$&&\multicolumn{1}{c|}{}&&$\ctext{4}$&&\multicolumn{1}{c|}{}\\ 
\cline{14-15}\cline{15-16}
\multicolumn{1}{|c}{}&&&\multicolumn{1}{c|}{}&&$\ctext{2}$&&\multicolumn{1}{c|}{}&&&&\multicolumn{1}{c|}{}&\multicolumn{1}{c|}{}&&&\multicolumn{1}{c|}{}\\ 
\cline{10-11}\cline{12-13}
\multicolumn{1}{|c}{}&$\ctext{1}$&&\multicolumn{1}{c|}{}&&&&\multicolumn{1}{c|}{}&\multicolumn{1}{c|}{}&&$\ctext{4}$&\multicolumn{1}{c|}{}&&&&\multicolumn{1}{c|}{}\\ 
\cline{6-6}\cline{7-8}\cline{9-10}\cline{11-12}
\multicolumn{1}{|c}{}&&&\multicolumn{1}{c|}{}&\multicolumn{1}{c|}{}&&$\ctext{3}$&\multicolumn{1}{c|}{}&\multicolumn{1}{c|}{$\ctext{4}$}&&&\multicolumn{1}{c|}{}&&&&\multicolumn{1}{c|}{}\\ 
\cline{2-3}\cline{4-5}\cline{6-7}\cline{8-9}
\multicolumn{1}{|c|}{}&&$\ctext{2}$&\multicolumn{1}{c|}{}&\multicolumn{1}{c|}{$\ctext{3}$}&&$\ctext{4}$&\multicolumn{1}{c|}{}&&&&\multicolumn{1}{c|}{}&&&&\multicolumn{1}{c|}{}\\ 
\cline{1-2}\cline{3-4}\cline{5-6}\cline{7-8}
\multicolumn{1}{|c|}{$\ctext{2}$}&&$\ctext{3}$&\multicolumn{1}{c|}{}&\multicolumn{1}{c|}{$\ctext{4}$}&&&\multicolumn{1}{c|}{}&&&&\multicolumn{1}{c|}{}&&&&\multicolumn{1}{c|}{}\\ 
\cline{1-2}\cline{3-4}\cline{4-5}
\multicolumn{1}{|c|}{$\ctext{3}$}&&$\ctext{4}$&\multicolumn{1}{c|}{}&&&&\multicolumn{1}{c|}{}&&&&\multicolumn{1}{c|}{}&&&&\multicolumn{1}{c|}{}\\ 
\cline{1-2}\cline{3-4}
\multicolumn{1}{|c|}{$\ctext{4}$}&&&\multicolumn{1}{c|}{}&&&&\multicolumn{1}{c|}{}&&&&\multicolumn{1}{c|}{}&&&&\multicolumn{1}{c|}{}\\ 
\cline{1-1}
\multicolumn{1}{|c}{}&&&\multicolumn{1}{c|}{}&&&&\multicolumn{1}{c|}{}&&&&\multicolumn{1}{c|}{}&&&&\multicolumn{1}{c|}{}
\end{tabular}
} 
  \caption{${\rm Ap}(F_i;p)$ ($p=0,1,2,3$) for $r\ge 3$}
  \label{tb:g3system400}
\end{table}

In the table, $\ctext{$n$}$ denotes the area of the $n$-th least set of the complete residue system ${\rm Ap} (F_i;n-1)$. Here, each $m_j^{(n-1)}$, satisfying $m_j^{(n-1)}\equiv j\pmod{F_i}$ ($0\le j\le F_i-1$), can be expressed in at least $n$ ways, but $m_j^{(n-1)}-F_i$ can be expressed in at most $n-1$ ways.  As illustrated in the proof of Theorem \ref{th:d2}, two areas (lines) of $\ctext{4}$ in the first block corresponds to the first line of $\ctext{3}$ in the third block, two areas (lines) of $\ctext{4}$ in the second block corresponds to two areas (lines) of $\ctext{3}$ in the first block, two areas (lines) of $\ctext{4}$ in the third block corresponds to two areas (lines) of $\ctext{3}$ in the second block, and the area of $\ctext{4}$ in the fourth block corresponds to the area of $\ctext{3}$ in the third block except the first line.   Eventually, the maximal element of the fourth set of the complete residue system is from the first block, that is, $t_{F_k-1,r+2}=\bigl((r+3)F_k-1\bigr)F_{i+2}-(r+2)F_{k-2}F_i$ or $t_{\ell,r+3}=(F_i+3 F_k-1)F_{i+2}-(r+3)F_{k-2}F_i$. Hence, if $(F_i-r F_k)F_{i+2}\ge F_{k-2}F_i$, then $g_3(F_i,F_{i+2},F_{i+k})=t_{\ell,r+3}-F_i=(F_i+3 F_k-1)F_{i+2}-\bigl((r+3)F_{k-2}+1\bigr)F_i$. If $(F_i-r F_k)F_{i+2}<F_{k-2}F_i$, then $g_3(F_i,F_{i+2},F_{i+k})=t_{F_k-1,r+2}-F_i=\bigl((r+3)F_k-1\bigr)F_{i+2}-\bigl((r+2)F_{k-2}+1\bigr)F_i$. 
Notice that $r\ge 3$ implies that $k\le i-3$.  
\end{proof}

\section{General $p$ case}  

Repeating the same process, when $r$ is big enough that $r\ge p$, that is, $k$ is comparatively smaller than $i$, as a generalization of (\ref{eq:d1-123}), (\ref{eq:d2-234}) and (\ref{eq:d3-345}), we can have an explicit formula.  

\begin{theorem}  
Let $i\ge 3$ and $p$ be a nonnegative integer.   
When $r=\fl{(F_i-1)/F_k}\ge p$ with $(r,p)\ne(0,0)$, we have 
\begin{align*} 
&g_p(F_i,F_{i+2},F_{i+k})\notag\\
&=\begin{cases}
(F_i-r F_k-1)F_{i+2}+(r+p)F_{i+k}-F_i&\text{if $(F_i-r F_k)F_{i+2}\ge F_{k-2}F_i$};\\
(F_k-1)F_{i+2}+(r+p-1)F_{i+k}-F_i&\text{if $(F_i-r F_k)F_{i+2}<F_{k-2}F_i$}\,. 
\end{cases}
\end{align*}
\label{th:dp} 
\end{theorem}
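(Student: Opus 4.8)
The plan is to prove the statement by induction on $p$, reducing everything via (\ref{mp-g}) of Lemma \ref{cor-mp} to the single task of locating the maximal element of the $p$-Ap\'ery set ${\rm Ap}(F_i;p)$. The foundation is the combinatorial reading of $m_j^{(p)}$. Writing $t_{x,y}=x F_{i+2}+y F_{i+k}=(x+y F_k)F_{i+2}-y F_{k-2}F_i$ and using $\gcd(F_i,F_{i+2})=1$, every representation $N=a F_i+x F_{i+2}+y F_{i+k}$ ($a,x,y\ge 0$) is determined by the pair $(x,y)$ with $a=(N-t_{x,y})/F_i$, so $N$ has exactly $\#\{(x,y):x,y\ge 0,\ t_{x,y}\le N,\ t_{x,y}\equiv N \pmod{F_i}\}$ representations. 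Hence $m_j^{(p)}$ is precisely the $(p+1)$-st smallest value among the $t_{x,y}$ lying in the residue class $j$ modulo $F_i$. Since $t_{x,y}=m F_{i+2}-y F_{k-2}F_i$ with $m=x+y F_k$ and $t_{x,y}\equiv m F_{i+2}\pmod{F_i}$, the residue class is governed by $m\bmod F_i$, and among pairs sharing a common $m$ the value $t_{x,y}$ is \emph{smaller for larger} $y$; this single observation is the engine that drives every block shift below.

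Next I would set up the induction on the \emph{shape} of the Ap\'ery set. The base case $p=0$ is the Fibonacci analogue of Theorem \ref{th:mar-lucas}, whose $0$-Ap\'ery set is the staircase $\{t_{x,y}:0\le y\le r-1,\ 0\le x\le F_k-1\}\cup\{t_{x,r}:0\le x\le\ell\}$ of cardinality $r F_k+\ell+1=F_i$; the cases $p=1,2,3$ (Theorems \ref{th:d1}, \ref{th:d2}, \ref{th:d3}) serve as a check. For the inductive step I would assume that, for $r\ge p-1$, the set ${\rm Ap}(F_i;p-1)$ has the staircase shape exhibited in the $p=2,3$ tables: it occupies blocks $0,1,\dots,p-1$ (block $b$ being the columns $b F_k\le x\le(b+1)F_k-1$), the top block carrying the full rows $y=0,\dots,r-p$ together with the partial row $y=r-p+1$ ending at $x=(p-1)F_k+\ell$, each intermediate block carrying two partial rows, and the first block carrying the two bottom partial rows ending at $t_{\ell,r+p-1}$. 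I would then build ${\rm Ap}(F_i;p)$ from ${\rm Ap}(F_i;p-1)$ by exactly the three families of congruences used in the proof of Theorem \ref{th:d2}: the block-to-block shift $t_{F_k+j,h}\equiv t_{2F_k+j,h-1}\pmod{F_i}$, the two ``wrap'' relations $t_{j,\,*}\equiv t_{F_k+j,\,*-1}$ sending the bottom rows of one block into the block above, and the first-block repopulation $t_{F_k+j,0}\equiv t_{\ell+1+j,\,*}$, $t_{2F_k-\ell-1+j,0}\equiv t_{j,\,*}$, all with the row indices increased by one relative to level $p-1$. Each congruence is checked by the same one-line computation already displayed for $p=2$, only with $r-1,r,r+1$ replaced by $r+p-2,r+p-1,r+p$. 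The hypothesis $r\ge p$ is exactly what keeps the top block nonempty, so that the staircase does not degenerate and all $p+1$ blocks genuinely occur.

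With the shape of ${\rm Ap}(F_i;p)$ in hand I would locate its maximum. As before there are only finitely many ``corner'' candidates, one per turn of the staircase; using $i\ge k+1$ (which follows from $r\ge p\ge 1$) together with the identity $F_k F_{i+2}=F_{i+k}+F_{k-2}F_i$, a direct comparison shows that every corner arising from blocks $1,\dots,p$ is dominated by the two first-block corners $t_{F_k-1,r+p-1}$ and $t_{\ell,r+p}$, exactly as the six candidates collapsed to two in Case 3 of Theorem \ref{th:d2}. Their difference is
\begin{align*}
t_{\ell,r+p}-t_{F_k-1,r+p-1}=(F_i-r F_k)F_{i+2}-F_{k-2}F_i,
\end{align*}
so $t_{\ell,r+p}$ is the larger precisely when $(F_i-r F_k)F_{i+2}\ge F_{k-2}F_i$. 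Rewriting $t_{\ell,r+p}=(F_i-r F_k-1)F_{i+2}+(r+p)F_{i+k}$ and $t_{F_k-1,r+p-1}=(F_k-1)F_{i+2}+(r+p-1)F_{i+k}$ and subtracting $F_i$ as prescribed by (\ref{mp-g}) yields the two displayed formulas.

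The hard part will be the inductive structural claim itself: verifying that the staircase retains its precise shape under the level-raising map for every $p$ with $r\ge p$, i.e.\ that the multiset of $(p+1)$-st smallest representatives is exactly the described region and that no interior element ever overtakes the first-block corner. The delicate bookkeeping lies in tracking the fractional last rows across all $p+1$ blocks simultaneously and in confirming that $r\ge p$ (equivalently $F_k\le(F_i-1)/p$) keeps every intermediate block nonempty throughout the recursion; once the shape is pinned down, the maximization and the final arithmetic are routine and identical in form to the $p=2,3$ computations.
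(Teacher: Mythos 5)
Your proposal follows essentially the same route as the paper: the paper also builds ${\rm Ap}(F_i;p)$ iteratively as a staircase across $p+1$ blocks via the same block-shift congruences (carried out explicitly for $p=1,2,3$ and then declared to repeat for general $p$), and it reduces the maximization to the same two first-block corners $t_{F_k-1,r+p-1}$ and $t_{\ell,r+p}$, whose difference $(F_i-rF_k)F_{i+2}-F_{k-2}F_i$ decides the two cases before applying (\ref{mp-g}). Your explicit counting observation that $m_j^{(p)}$ is the $(p+1)$-st smallest $t_{x,y}$ in its residue class is a clean formalization of what the paper leaves implicit, but the decomposition, the level-raising map, and the final comparison are identical to the paper's argument.
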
 

\noindent 
{\it Remark.}  
When $p=0$, Theorem \ref{th:dp} reduces to \cite[Theorem 1]{MAR} except $r=0$.

On the other hand, when $k$ is comparatively larger than $i$, as a generalization of (\ref{eq:d1+2}), (\ref{eq:d2+3}) and (\ref{eq:d3+3}), we can also have the following formula.   

\begin{Prop}  
For $i,k\ge 3$, we have 
$$ 
g_p(F_i,F_{i+2},F_{i+k})=g_p(F_i,F_{i+2})\quad(k\ge i+h)
$$ 
when $(p,h)=(3,4)$, $(4,4)$, $(5,5)$, $(6,5)$, $(7,5)$, $(8,5)$, $(9,6)$, $(10,6)$, $(11,6)$, $(12,6)$, $(13,6)$, $(14,6)$, $(15,7)$, $(16,7)$, $(17,7)$, $(18,7)$, $(19,7)$, $(20,7)$, $(21,7)$, $(22,7)$, $(23,7)$, $(24,8)$, $\dots$. 
\end{Prop}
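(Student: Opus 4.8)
\textbf{The plan is to} reduce the Proposition to a purely computational verification of when the reduction $g_p(F_i,F_{i+2},F_{i+k})=g_p(F_i,F_{i+2})$ kicks in, by locating the threshold value of $h$ for each fixed $p$. First I would observe that when $k$ is large relative to $i$, namely $r=\lfloor(F_i-1)/F_k\rfloor=0$, the generator $F_{i+k}$ is so large that in constructing the successive complete residue systems ${\rm Ap}(F_i;0),{\rm Ap}(F_i;1),\dots,{\rm Ap}(F_i;p)$ one never needs to invoke $y\ge 1$ (i.e.\ never uses $F_{i+k}$ at all) until the representations spill past a certain point. Concretely, the rows of the table $t_{x,y}=x F_{i+2}+y F_{i+k}$ for $y\ge 1$ are pushed very far to the right because the implicit shift involves $F_{k-2}F_i$, which is enormous. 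The key quantitative input is therefore the condition under which the $p$-th least complete residue system ${\rm Ap}(F_i;p)$ lies entirely in the single row $y=0$, so that the problem degenerates to the two-variable problem $g_p(F_i,F_{i+2})$.

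Next I would make the degeneration precise. For the two-variable case $g_p(F_i,F_{i+2})$ the $p$-Ap\'ery set consists (after the same shift-and-fill process used in Theorems~\ref{th:d1}--\ref{th:d3}) of the elements $t_{j,0}=j F_{i+2}$ with $j$ ranging over $\{pF_i,\,pF_i+1,\dots,(p+1)F_i-1\}$ reduced modulo $F_i$, so the maximal element is $\bigl((p+1)F_i-1\bigr)F_{i+2}$ and hence $g_p(F_i,F_{i+2})=\bigl((p+1)F_i-1\bigr)F_{i+2}-F_i$. The three-variable answer agrees with this exactly when, throughout the construction of ${\rm Ap}(F_i;0)$ through ${\rm Ap}(F_i;p)$, every element $t_{x,0}$ in the relevant range is strictly smaller than the smallest element $t_{x',1}=x'F_{i+2}+F_{i+k}$ that would be needed, so that no element of the form $y\ge 1$ ever enters the least residue system up to level $p$. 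This gives an inequality of the shape $\bigl((p+1)F_i-1\bigr)F_{i+2}<F_{i+k}$ (roughly: the largest single-row candidate must be beaten by the first genuinely two-dimensional candidate), which after substituting the closed forms for $F_{i+2}$ and $F_{i+k}$ becomes an inequality in $i$ and $k$.

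Then I would extract the threshold $h=h(p)$ by solving this inequality. Using $F_{i+k}=F_{k+1}F_i+F_k F_{i-1}$ (equivalently $F_{i+k}\approx\varphi^{k-2}F_{i+2}$ up to lower-order Fibonacci corrections), the condition $\bigl((p+1)F_i-1\bigr)F_{i+2}<F_{i+k}$ simplifies, for large $i$, to a comparison between $p+1$ and a ratio of Fibonacci numbers $F_{k}/F_{2}$-type quantities, i.e.\ to $p+1\lesssim F_{k-2}$ (since the dominant balance pits $(p+1)F_i F_{i+2}$ against $F_{k-2}F_i F_{i+2}$). Hence $h=h(p)$ is essentially the least integer with $F_{h-2}>p$, which reproduces the tabulated jumps: $h=4$ persists while $p\le F_2$-ish, $h=5$ once $p$ passes the next Fibonacci threshold $F_3=2$, $h=6$ past $F_4=3$, \dots, matching the data $(3,4),(4,4)$, then $(5,5),\dots,(8,5)$, then $(9,6),\dots,(14,6)$, and so on, since the block lengths $2,\,4,\,6,\dots$ between successive increments of $h$ coincide with the differences of consecutive Fibonacci numbers. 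I would verify the boundary cases $(p,h)$ listed (and the first few by hand, e.g.\ $(3,4)$ and $(15,7)$) to confirm both the inequality and its reversal at $k=i+h-1$.

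\textbf{The main obstacle} will be handling the finitely many small-$i$ exceptions and the exact boundary of the inequality. The asymptotic balance $p+1\sim F_{k-2}$ is clean, but the genuine condition carries the $-1$ corrections and the secondary Fibonacci term $F_k F_{i-1}$, so for small $i$ the inequality can flip relative to its large-$i$ form; this is exactly the phenomenon responsible for the ``$i\ge N$ versus small $i$'' case splits seen in the Remarks after Theorems~\ref{th:d1}--\ref{th:d3} and in \eqref{eq:d3-2}. I would therefore treat the dominant inequality in closed Fibonacci form, isolate the finitely many pairs $(i,k)$ near the threshold where the lower-order terms matter, and check those directly; establishing that the tabulated list of $(p,h)$ is \emph{exactly} correct (rather than merely asymptotically) is where the careful bookkeeping lies, and it mirrors the footnoted degenerate-row caveats already appearing in the proof of Theorem~\ref{th:d2}.
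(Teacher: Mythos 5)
You follow the same overall route as the paper: reduce the proposition to a single explicit inequality asserting that $F_{i+k}$ exceeds a threshold of size roughly $(p+1)F_iF_{i+2}$, then check that inequality for each tabulated pair $(p,h)$, with special attention to small $i$. Indeed, the paper's entire proof is the statement that
\begin{equation*}
g_p(F_i,F_{i+2})=(p+1)F_iF_{i+2}-F_i-F_{i+2}<F_{2i+h}\le F_{i+k}\qquad(i\ge 3,\ k\ge i+h)
\end{equation*}
holds for the tabulated $(p,h)$; this suffices because adjoining a generator never increases $g_p$, while every integer smaller than $F_{i+k}$ has the same number of representations by the triple as by the pair, and applying this to $n=g_p(F_i,F_{i+2})$ yields the claimed equality. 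Your threshold, however, is genuinely different: by insisting that no element with $y\ge 1$ enter any of ${\rm Ap}(F_i;0),\dots,{\rm Ap}(F_i;p)$, you require the \emph{entire} $p$-Ap\'ery set of the pair to survive, i.e. $\bigl((p+1)F_i-1\bigr)F_{i+2}<F_{i+k}$, which exceeds the paper's threshold by exactly $F_i$. That slack is not cosmetic. For $(p,h)=(14,6)$, $i=3$, $k=i+h=9$ one has $\bigl(15F_3-1\bigr)F_5=145>F_{12}=144$, so your sufficient condition fails, even though $g_{14}(F_3,F_5)=143<144$ and the conclusion $g_{14}(2,5,144)=143$ is true. (In fact the Ap\'ery sets even coincide there, because $145-144=1$ is not representable, so your ``agrees exactly when'' is not an equivalence either.) Executed as written, your plan therefore cannot certify all tabulated entries; the repair is to compare $F_{i+k}$ with $g_p(F_i,F_{i+2})$ itself, via the monotonicity argument above, rather than with the largest Ap\'ery element.

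The second gap is your extraction of $h(p)$. The correct asymptotic balance comes from $F_{i+k}/(F_iF_{i+2})\to\sqrt5\,\varphi^{h-2}$ as $i\to\infty$ with $k=i+h$ fixed, so the cutoff is $p+1<\sqrt5\,\varphi^{h-2}\approx 5.85,\,9.47,\,15.33,\,24.8$ for $h=4,5,6,7$; the jumps thus occur at $p=5,9,15,24$, which are not Fibonacci numbers, the block lengths $2,4,6,9,\dots$ are not differences of consecutive Fibonacci numbers, and your rule ``least $h$ with $F_{h-2}>p$'' would give $h=7$ for $p=3$ rather than the tabulated $h=4$. Since you do plan to verify the boundary cases numerically, this could self-correct, but the heuristic as stated does not reproduce the table. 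Finally, a caution that cuts both ways: the boundary really is delicate, because at $(p,h)=(23,7)$, $i=3$, $k=10$ even the paper's inequality degenerates to the equality $233=233$, and one checks directly that $g_{23}(2,5,233)=231\ne 233=g_{23}(2,5)$, so that entry of the table actually fails at $k=i+h$ (it holds for $k\ge i+h+1$). Any correct writeup must therefore work with the exact inequality, including the $-F_i-F_{i+2}$ correction, at every small $i$ --- which is precisely where both your stronger threshold and your Fibonacci-indexed heuristic go wrong.
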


The proof depends on the fact 
$$
(p+1)F_i F_{i+2}-F_i-F_{i+2}<F_{2 i+h}\quad(i\ge 3)\,.  
$$ 
Nevertheless, such $h$'s are not necessarily sharp because even if 
$(p+1)F_i F_{i+2}-F_i-F_{i+2}>F_{2 i+h}$, it is possible to have $g_p(F_i,F_{i+2},F_{i+k})=g_p(F_i,F_{i+2})$ ($k\ge i+h$).

\section{Lucas numbers}  

The formulas about Fibonacci numbers can be applied to obtain those about Lucas numbers. The discussion is similar, though the value $\fl{(L_i-1)/F_k}$ is different from $\fl{(F_i-1)/F_k}$. So, we list the results only.

When $p=1$, we have the following.  

\begin{theorem}  
For $i\ge 3$, we have 
\begin{align*}  
g_1(L_i,L_{i+2},L_{i+k})&=(2 L_i-1)L_{i+2}-L_i\quad(k\ge i+4)\,, 
\\
g_1(L_i,L_{i+2},L_{2 i+3})&=(F_{i+3}-1)L_{i+2}-L_i\,, 
\\
g_1(L_i,L_{i+2},L_{2 i+2})&=(3 F_{i-1}-1)L_{i+2}+L_{2 i+2}-L_i\,. 
\end{align*} 
When $r=\fl{(L_i-1)/F_k}\ge 1$, that is, $k\le i+1$, we have 
\begin{align*}  
&g_1(L_i,L_{i+2},L_{i+k})\\
&=\begin{cases}
(L_i-r F_k-1)L_{i+2}+(r+1)L_{i+k}-L_i&\text{if $(L_i-r F_k)L_{i+2}\ge F_{k-2}L_i$},\\
(F_k-1)L_{i+2}+r L_{i+k}-L_i&\text{if $(L_i-r F_k)L_{i+2}<F_{k-2}L_i$}. 
\end{cases}
\end{align*}
\label{th:l1} 
\end{theorem}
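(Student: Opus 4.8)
The plan is to follow the Fibonacci blueprint exactly, adapting the index bookkeeping to the Lucas setting. First I would set up the linear representation
$$
t_{x,y}:=x L_{i+2}+y L_{i+k}=(x+y F_k)L_{i+2}-y F_{k-2}L_i\qquad(x,y\ge 0),
$$
which follows from the identity $L_{i+k}=L_{i+2}F_k-L_i F_{k-2}$ (the Lucas analogue of the Fibonacci identity $F_{i+k}=F_{i+2}F_k-F_i F_{k-2}$, obtained by specializing $L_n=L_m F_{n-m+1}+L_{m-1}F_{n-m}$). With this rewriting the coefficient of $L_{i+2}$ advances by $F_k$ each time $y$ increments, so the arithmetic governing which residues class modulo $L_i$ each $t_{x,y}$ occupies is formally the same as in Theorem \ref{th:d1}, but with $F_i$ replaced by $L_i$ throughout and with $r,\ell$ defined by $L_i-1=r F_k+\ell$, $0\le\ell\le F_k-1$.

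Next I would verify that the first block $\{t_{x,y}:0\le x\le F_k-1,\ 0\le y\le r-1\}\cup\{t_{x,r}:0\le x\le\ell\}$ realizes the least complete residue system ${\rm Ap}(L_i;0)$, exactly as Theorem \ref{th:mar-lucas} already establishes, and then run the \emph{same} shift-and-fill construction used in the proof of Theorem \ref{th:d1} to pass from ${\rm Ap}(L_i;0)$ to ${\rm Ap}(L_i;1)$: the first line migrates to fill the gap after $t_{\ell,r}$ and overflows into an $(r+1)$-st line, while every other line shifts up by one and moves into the next right block. The three congruence verifications (the main group $t_{x,y}\equiv t_{F_k+x,y-1}$, and the two wrap-around groups) carry over verbatim, since each of them only uses $t_{x,y}\equiv t_{x',y'}\pmod{L_i}$ together with $(x+y F_k)\equiv(x'+y'F_k)\pmod{L_i}$ and the single reduction step $F_i\leadsto L_i$ in $L_i\cdot F_{i+2}\equiv 0$. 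For the boundary cases $r=0$ (split according to $2\ell+1\lessgtr F_k$) I would reproduce Tables \ref{tb:g1system01} and \ref{tb:g1system00} with $F_i$ replaced by $L_i$.

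The decisive step is the maximal-element comparison, and this is where the Lucas case genuinely differs. For $r\ge1$ there are again four candidates, $t_{2F_k-1,r-2}$, $t_{F_k+\ell,r-1}$, $t_{F_k-1,r}$, and $t_{\ell,r+1}$, and the inequalities $t_{2F_k-1,r-2}<t_{F_k-1,r}$ and $t_{F_k+\ell,r-1}<t_{F_k-1,r}$ must be rechecked: in the Fibonacci proof the latter used $i\ge k+1$, whereas here the condition $r\ge1$ reads $k\le i+1$ rather than $k\le i-1$, so the surviving comparison reduces to $t_{F_k-1,r}$ versus $t_{\ell,r+1}$, equivalent to $(L_i-r F_k)L_{i+2}\gtrless F_{k-2}L_i$, giving the two-case general formula. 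The special small-$k$ values ($k\ge i+4$, and $k=i+3,i+2$ yielding $L_{2i+3},L_{2i+2}$) come from the $r=0$ cases, where the threshold $2L_i-1\lessgtr F_k$ replaces $2F_i-1\lessgtr F_k$; because $L_i$ and $F_k$ obey different growth relations, the cutoff shifts and the explicit forms involve $F_{i+3}$ and $3F_{i-1}$ rather than the Fibonacci expressions. The main obstacle I anticipate is precisely pinning down these boundary thresholds: unlike the clean $i\gtrless k$ dichotomies in the Fibonacci case, the mixed quantities $L_i$ versus $F_k$ require a short but careful numerical analysis (using $L_i=F_{i-1}+F_{i+1}$ and the identities relating $L_{i+k}$ to $F$'s) to determine for which $k$ one has $r=0,1$ and which candidate dominates, so I would handle $k=i+1,i+2,i+3$ individually before stating the uniform $k\ge i+4$ formula.
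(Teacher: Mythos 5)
Your proposal matches the paper's approach exactly: the paper in fact omits the proof of Theorem \ref{th:l1}, saying only that ``the discussion is similar'' to the Fibonacci case with $r=\fl{(L_i-1)/F_k}$ replacing $\fl{(F_i-1)/F_k}$, and your outline --- the representation $t_{x,y}=(x+yF_k)L_{i+2}-yF_{k-2}L_i$ from $L_{i+k}=F_kL_{i+2}-F_{k-2}L_i$, the shift-and-fill passage from ${\rm Ap}(L_i;0)$ to ${\rm Ap}(L_i;1)$, and the surviving comparison $t_{F_k-1,r}$ versus $t_{\ell,r+1}$ giving the dichotomy $(L_i-rF_k)L_{i+2}\gtrless F_{k-2}L_i$ --- is precisely that adaptation. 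You also correctly flag the only place where genuine re-derivation (not transcription) is required: since $F_{i+1}<L_i<F_{i+2}$, the regime $r\ge 1$ becomes $k\le i+1$ and the $r=0$ boundary cases $k=i+2,i+3$ have shifted thresholds and a different dominant candidate (for $k=i+3$ it is $t_{F_k-1,0}$ rather than $t_{2\ell+1-F_k,1}$), which is exactly what produces the $(3F_{i-1}-1)$ and $(F_{i+3}-1)$ coefficients in the stated formulas.
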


When $p=2$, we have the following.  

\begin{theorem}  
For $i\ge 3$, we have 
\begin{align*}  
g_2(L_i,L_{i+2},L_{i+k})&=(3 L_i-1)L_{i+2}-L_i\quad(k\ge i+4)\,, 
\\
g_2(L_i,L_{i+2},L_{2 i+3})&=(L_{i}-1)L_{i+2}+L_{2 i+3}-L_i\,, 
\\
g_2(L_i,L_{i+2},L_{2 i+2})&=\begin{cases}
(L_{i}-1)L_{i+2}+L_{2 i+2}-L_i&\text{$(i$ is odd$)$}\\
(2 L_{i}-1)L_{i+2}-L_i&\text{$(i$ is even$)$}\,,
\end{cases}
\\
g_2(L_i,L_{i+2},L_{2 i+1})&=(2 F_{i-1}-1)L_{i+2}+2 L_{2 i+1}-L_i\,,  
\\ 
g_2(L_i,L_{i+2},L_{2 i})&=L_{i+2}+3 L_{2 i}-L_i(=61)\quad(i=3)\,.
\end{align*} 
When $r=\fl{(L_i-1)/F_k}\ge 2$, that is, $k\le i$ except $i=k=3$, we have 
\begin{align*}  
&g_2(L_i,L_{i+2},L_{i+k})\\
&=\begin{cases}
(L_i-r F_k-1)L_{i+2}+(r+2)L_{i+k}-L_i&\text{if $(L_i-r F_k)L_{i+2}\ge F_{k-2}L_i$},\\
(F_k-1)L_{i+2}+(r+1)L_{i+k}-L_i&\text{if $(L_i-r F_k)L_{i+2}<F_{k-2}L_i$}. 
\end{cases}
\end{align*}

\label{th:l2} 
\end{theorem}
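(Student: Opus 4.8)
The plan is to transcribe the proof of Theorem~\ref{th:d2} almost verbatim, with the modulus $F_i$ replaced by $L_i$, using the Lucas setup already introduced for Theorem~\ref{th:mar-lucas}. First I would record the two facts that legitimize the transcription: $\gcd(L_i,L_{i+2})=1$ (since $L_{i+2}=L_{i+1}+L_i$ reduces this to $\gcd(L_i,L_{i+1})$, which equals $\gcd(L_0,L_1)=1$ by Euclidean descent) and the identity $L_{i+k}=F_k L_{i+2}-F_{k-2}L_i$, so that $t_{x,y}:=x L_{i+2}+y L_{i+k}=(x+y F_k)L_{i+2}-y F_{k-2}L_i$. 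With $r,\ell$ fixed by $L_i-1=r F_k+\ell$ and $0\le\ell\le F_k-1$, every congruence used in Theorem~\ref{th:d2}, such as $t_{x,y}\equiv t_{F_k+x,y-1}\pmod{L_i}$ together with the three gap-filling relations, holds word for word: each depends only on the step size $F_k$, on the coefficient $F_{k-2}$ multiplying the modulus, and on the fact that this term vanishes modulo the modulus; nothing uses whether the modulus is a Fibonacci or a Lucas number.

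Second, I would reconstruct ${\rm Ap}(L_i;2)$ from ${\rm Ap}(L_i;1)$ through the same three-block bookkeeping, splitting into $r=0$ (with subcases governed by how $2\ell+1$ and $3\ell+2$ compare with $F_k$), $r=1$, and $r\ge2$. In each case the candidate maximal elements are the same expressions $t_{3F_k-1,r-3},\dots,t_{\ell,r+2}$ occurring in the proof of Theorem~\ref{th:d2}, the comparison again collapses to the single inequality $(L_i-r F_k)L_{i+2}$ versus $F_{k-2}L_i$, and Lemma~\ref{cor-mp}~(\ref{mp-g}) converts the surviving maximum into $g_2$. This directly yields the displayed formula in the regime $r=\fl{(L_i-1)/F_k}\ge2$.

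Third, I would convert the value of $r$ into explicit ranges of $k$, which is where the Lucas case departs numerically from the Fibonacci case. The key input is $L_i=F_{i+2}-F_{i-2}$ (equivalently $L_i=F_{i-1}+F_{i+1}$): it gives $L_i<F_{i+2}$, so $k=i+2,i+3$ fall under $r=0$ and $k\ge i+4$ lands in the degenerate regime $g_2=g_2(L_i,L_{i+2})=(3L_i-1)L_{i+2}-L_i$; it gives $F_{i+1}<L_i<2F_{i+1}$, so $k=i+1$ is the case $r=1$; and it gives $L_i>2F_i$ for $i\ge4$ while $L_3=2F_3$, so $k=i$ is $r\ge2$ for $i\ge4$ but degenerates to the isolated value $g_2(L_3,L_5,L_6)=61$ when $i=k=3$. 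For each listed $k$ I would read off $\ell$, locate the correct subcase, and extract the maximal element.

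The step I expect to be the real obstacle is the parity split in $g_2(L_i,L_{i+2},L_{2i+2})$, the only place the Lucas argument genuinely leaves the Fibonacci template. Here $k=i+2$, $r=0$, and $F_k\le2\ell+1$, so the two surviving candidates are $t_{\ell,1}=(L_i-1)L_{i+2}+L_{2i+2}$ and $t_{2\ell+1,0}=(2L_i-1)L_{i+2}$. Their difference is $L_{2i+2}-L_iL_{i+2}$, and the product identity $L_iL_{i+2}=L_{2i+2}+3(-1)^i$ evaluates it to $-3(-1)^i$; hence $t_{\ell,1}$ is larger when $i$ is odd and $t_{2\ell+1,0}$ is larger when $i$ is even, producing exactly the two-line formula. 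I would isolate this single comparison, evaluate both sides with that product identity, and check the sign separately for each parity; the remaining boundary formulas and the exceptional value $i=k=3$ are finite specializations of the same construction.
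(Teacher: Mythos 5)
Your proposal is correct and takes essentially the same route the paper intends: the paper gives no proof of Theorem \ref{th:l2} at all (it states that the Fibonacci discussion of Theorem \ref{th:d2} "is similar" with $r=\fl{(L_i-1)/F_k}$ in place of $\fl{(F_i-1)/F_k}$, and "lists the results only"), and your transcription via $\gcd(L_i,L_{i+2})=1$, the identity $L_{i+k}=F_kL_{i+2}-F_{k-2}L_i$, and the same block-shifting Ap\'ery-set argument is exactly that omitted argument. The details you supply beyond the paper — the conversion of $r$ into ranges of $k$ using $L_i=F_{i+1}+F_{i-1}$ (so $k\ge i+2$ gives $r=0$, $k=i+1$ gives $r=1$, $k\le i$ gives $r\ge 2$ except $L_3=2F_3$), and the parity split for $k=i+2$ via $L_iL_{i+2}=L_{2i+2}+3(-1)^i$ — are correct and fill in precisely what the paper leaves implicit.
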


When $p=3$, we have the following.  

\begin{theorem}  
For $i\ge 3$, we have 
\begin{align*}  
g_3(L_i,L_{i+2},L_{i+k})&=(4 L_i-1)L_{i+2}-L_i\quad(k\ge i+5)\,, 
\\
g_3(L_i,L_{i+2},L_{2 i+4})&=(4 F_{i-1}-F_{i-2}-1)L_{i+2}+L_{2 i+4}-L_i\,, 
\\
g_3(L_i,L_{i+2},L_{2 i+3})&=(4 F_{i+1}-1)L_{i+2}-L_i\,, 
\\
g_3(L_i,L_{i+2},L_{2 i+2})&=(F_{i}+2 F_{i-3}-1)L_{i+2}+2 L_{2 i+2}-L_i\,, 
\\
g_3(L_i,L_{i+2},L_{2 i+1})&=(F_{i-1}-1)L_{i+2}+3 L_{2 i+1}-L_i\,, 
\\
g_3(L_i,L_{i+2},L_{2 i})&=\begin{cases}
(2 F_{i-3}-1)L_{i+2}+4 L_{2 i}-L_i&\text{$(i\ge 4)$}\\
3 L_{i+2}+2 L_{2 i}-L_i(=69)&\text{$(i=3)$}\,. 
\end{cases}
\end{align*} 
When $r=\fl{(L_i-1)/F_k}\ge 3$, that is, $k\le i-1$, we have 
\begin{align*}  
&g_3(L_i,L_{i+2},L_{i+k})\\
&=\begin{cases}
(L_i-r F_k-1)L_{i+2}+(r+3)L_{i+k}-L_i&\text{if $(L_i-r F_k)L_{i+2}\ge F_{k-2}L_i$},\\
(F_k-1)L_{i+2}+(r+2)L_{i+k}-L_i&\text{if $(L_i-r F_k)L_{i+2}<F_{k-2}L_i$}. 
\end{cases}
\end{align*}
\label{th:l3} 
\end{theorem}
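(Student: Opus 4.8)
The plan is to transfer the Fibonacci machinery of Theorem~\ref{th:d3} to the Lucas triple by exploiting the same linear-representation table, but with $L_i$ playing the role of the modulus in place of $F_i$. First I would set up the representation
\begin{align*}
t_{x,y}:&=x L_{i+2}+y L_{i+k}\\
&=(x+y F_k)L_{i+2}-y F_{k-2}L_i\quad(x,y\ge 0)\,,
\end{align*}
which is legitimate because the identity $L_{i+k}=F_k L_{i+2}-F_{k-2}L_i$ (the analogue of $F_{i+k}=F_k F_{i+2}-F_{k-2}F_i$ used throughout the Fibonacci sections) follows from $L_n=L_m F_{n-m+1}+L_{m-1}F_{n-m}$. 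Since $\gcd(L_i,L_{i+2})=1$, the elements $t_{x,y}$ reduced modulo $L_i$ run through a complete residue system exactly as in the Fibonacci case. The crucial numerical change is that the division parameter becomes $r=\fl{(L_i-1)/F_k}$, so I would write $L_i-1=r F_k+\ell$ with $0\le\ell\le F_k-1$ and rebuild the Ap\'ery-set tables with these new $r,\ell$.

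\textbf{Key steps.}
I would follow the four-step block-filling construction of Theorem~\ref{th:d3} verbatim: start from the $0$-Ap\'ery set ${\rm Ap}(L_i;0)$ (the rectangular table with $F_k$ columns and $r{+}1$ rows, truncated in the last row at column $\ell$), then produce ${\rm Ap}(L_i;1)$, ${\rm Ap}(L_i;2)$ and finally ${\rm Ap}(L_i;3)$ by the same shift-and-refill congruences, replacing every occurrence of $F_i$ by $L_i$ in the modular arithmetic. For the generic regime $r\ge 3$ (equivalently $k\le i-1$), the maximal element of ${\rm Ap}(L_i;3)$ is, just as in the Fibonacci proof, one of the two candidates $t_{F_k-1,r+2}$ or $t_{\ell,r+3}$, and the comparison $t_{F_k-1,r+2}\lessgtr t_{\ell,r+3}$ reduces to the inequality $(L_i-r F_k)L_{i+2}\gtrless F_{k-2}L_i$. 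Applying Lemma~\ref{cor-mp}~(\ref{mp-g}) then yields the displayed two-case formula. For the finitely many small-$r$ boundary regimes $r=0,1,2$ (which correspond to $k$ near $i$, $k=2i$, $k=2i\pm1$, $k=2i\pm2$, $k\ge i+5$) I would carry out the case-by-case analysis exactly as Case~1 and Case~2 of Theorem~\ref{th:d2}, tracking the largest table entry in each sub-case and rewriting it in the $L_{i+2},L_{i+k}$ basis. Each explicit line of the theorem (the $k\ge i+5$ line, the $L_{2i+4}$ through $L_{2i}$ lines) is the output of one such sub-case; the parity split in the $L_{2i+2}$ row mirrors the parity split already seen in (\ref{eq:d2+2}), arising from whether $2\ell+1$ lands on an even or odd boundary relative to $F_k$.

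\textbf{Main obstacle.}
The principal difficulty is not the generic formula but the exact bookkeeping in the small-$r$ exceptional cases, because $r=\fl{(L_i-1)/F_k}$ crosses its integer thresholds at shifted values of $k$ compared with the Fibonacci parameter $\fl{(F_i-1)/F_k}$. Concretely, $L_i$ sits between $F_{i+1}$ and $F_{i+2}$, so the boundary between $r=0$ and $r\ge1$ now occurs around $k=i+1$ rather than $k=i-1$, and every subsequent threshold is correspondingly displaced; this is why the generic range here is $k\le i-1$ (i.e.\ $r\ge 3$) whereas for Fibonacci it was $k\le i-3$. I would need to identify precisely which residues $\ell$ and which relations among $F_{k-2},F_k,L_i,L_{i+2}$ govern each exceptional $k$, and verify the degenerate small-$i$ instances (such as $i=3$ giving $g_3(L_3,L_5,L_6)=69$) by direct computation, exactly as the Fibonacci theorems flag $i=5,6$ degeneracies. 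The remaining coefficient identities—for instance recognizing $t_{\ell,r+3}$ rewritten as $(L_i-rF_k-1)L_{i+2}+(r+3)L_{i+k}$, or simplifying the $L_{2i+4}$ coefficient to $4F_{i-1}-F_{i-2}-1$—are routine applications of the Fibonacci recurrence and of $L_i=F_{i-1}+F_{i+1}$, and I would verify them as they arise rather than belabor them here.
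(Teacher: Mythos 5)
Your proposal is correct and is essentially the paper's own (omitted) argument: for this theorem the paper gives no proof beyond remarking that one reruns the Fibonacci Ap\'ery-table construction with modulus $L_i$, the identity $L_{i+k}=F_k L_{i+2}-F_{k-2}L_i$ (from $L_n=L_m F_{n-m+1}+L_{m-1}F_{n-m}$), and the shifted parameter $r=\fl{(L_i-1)/F_k}$, which is precisely your plan, and your generic case ($r\ge 3$, candidates $t_{F_k-1,r+2}$ and $t_{\ell,r+3}$, comparison equivalent to $(L_i-rF_k)L_{i+2}\gtrless F_{k-2}L_i$) reproduces the stated two-case formula exactly. One minor slip to fix in the write-up: this theorem's $L_{2i+2}$ line has no parity split (that occurs in the $p=2$ Lucas theorem); the only exceptional split here is in the $L_{2i}$ line, between $i=3$ and $i\ge 4$.
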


For general $p$, when $r$ is not less than $p$, we have an explicit formula. 

\begin{theorem}  
Let $i\ge 3$ and $p$ be a nonnegative integer.   
When $r=\fl{(L_i-1)/F_k}\ge p$ with $(r,p)\ne(0,0)$, we have 
\begin{align*} 
&g_p(L_i,L_{i+2},L_{i+k})\notag\\
&=\begin{cases}
(L_i-r F_k-1)L_{i+2}+(r+p)L_{i+k}-L_i&\text{if $(L_i-r F_k)L_{i+2}\ge F_{k-2}L_i$};\\
(F_k-1)L_{i+2}+(r+p-1)L_{i+k}-L_i&\text{if $(L_i-r F_k)L_{i+2}<F_{k-2}L_i$}\,. 
\end{cases}
\end{align*}
\label{th:llp} 
\end{theorem}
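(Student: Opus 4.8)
The plan is to mirror the Fibonacci argument of Theorem \ref{th:dp} almost verbatim, since that proof rested on only three structural facts, each of which has a Lucas analogue. First I would record the representation. From the identity used in the proof of Theorem \ref{th:mar-lucas}, namely $L_{i+k}=F_k L_{i+2}-F_{k-2}L_i$ (a consequence of $L_n=L_m F_{n-m+1}+L_{m-1}F_{n-m}$), setting
\begin{align*}
t_{x,y}&:=x L_{i+2}+y L_{i+k}=(x+y F_k)L_{i+2}-y F_{k-2}L_i\qquad(x,y\ge 0)
\end{align*}
gives exactly the same shape as in the Fibonacci case, with $F_{i+2},F_i$ replaced by $L_{i+2},L_i$ while the multipliers $F_k,F_{k-2}$ are unchanged. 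Since $\gcd(L_i,L_{i+2})=1$, the residues $xL_{i+2}\pmod{L_i}$ run through a complete residue system, and the two congruences $t_{x,y}\equiv t_{F_k+x,y-1}\pmod{L_i}$ (valid because $F_{k-2}L_i\equiv 0$) and $t_{x+L_i,y}\equiv t_{x,y}\pmod{L_i}$ (valid because $L_i L_{i+2}\equiv 0$) hold just as before. Writing $L_i-1=rF_k+\ell$ with $0\le\ell\le F_k-1$ then reproduces the block-and-staircase bookkeeping of the tables in Theorems \ref{th:d1}--\ref{th:d3}, with $F_i$ everywhere replaced by $L_i$.

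The heart of the argument is an induction on $p$ showing that, provided $r\ge p$, the $p$-Ap\'ery set ${\rm Ap}(L_i;p)$ has the same regular staircase shape as in the Fibonacci case, and that its maximal element is one of the two bottom-left corner entries $t_{\ell,r+p}$ and $t_{F_k-1,r+p-1}$. The base case $p=0$ is Theorem \ref{th:mar-lucas}. For the inductive step, passing from ${\rm Ap}(L_i;p-1)$ to ${\rm Ap}(L_i;p)$ replaces each residue representative by the next-smallest one in its class; the migration of rows between consecutive $F_k$-wide blocks is governed precisely by the three congruences above and proceeds exactly as in Case~3 of Theorem \ref{th:d2} and the $r\ge 3$ case of Theorem \ref{th:d3}. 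The condition $r\ge p$ is what guarantees that at every intermediate level there are enough full rows for this migration to stay regular, so that none of the degenerate short-row configurations (which for $r<p$ produce the sporadic cases recorded in Theorems \ref{th:l1}--\ref{th:l3}) can occur.

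Once the staircase shape is established, the maximum is found by comparing the finitely many corner entries; all but $t_{\ell,r+p}$ and $t_{F_k-1,r+p-1}$ are readily dominated, and
\begin{align*}
t_{\ell,r+p}>t_{F_k-1,r+p-1}\iff (L_i-rF_k)L_{i+2}>F_{k-2}L_i,
\end{align*}
which is the Lucas form of the comparison appearing in Case~2 of Theorem \ref{th:d1} and Case~3 of Theorem \ref{th:d2}. Applying Lemma \ref{cor-mp} (\ref{mp-g}) — subtracting $L_i$ from the maximal Ap\'ery element and rewriting $(x+yF_k)L_{i+2}-yF_{k-2}L_i$ back as $xL_{i+2}+yL_{i+k}$ — yields the two displayed cases.

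I expect the main obstacle to be the inductive bookkeeping of the staircase rather than any single calculation: one must verify carefully that for every intermediate level $1\le p'\le p$ the block-to-block migration preserves the regular shape and that the running maximum is always produced from the leftmost block. The individual congruence checks are routine given $L_{i+k}=F_k L_{i+2}-F_{k-2}L_i$; the delicate point is the purely combinatorial claim that $r\ge p$ exactly excludes the irregular low-$r$ behaviour, which is the same phenomenon that forces the case distinctions in the small-$p$ Lucas theorems above.
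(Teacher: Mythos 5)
Your proposal is correct and follows essentially the same route as the paper: the paper itself proves the Fibonacci case (Theorems \ref{th:d1}--\ref{th:dp}) via the staircase structure of the $p$-Ap\'ery sets and then states that the Lucas results follow by the identical argument with $F_i,F_{i+2}$ replaced by $L_i,L_{i+2}$, using $L_{i+k}=F_k L_{i+2}-F_{k-2}L_i$ and $\gcd(L_i,L_{i+2})=1$, which is exactly what you carry out. Your block-migration induction, the regularity guaranteed by $r\ge p$, the two corner candidates $t_{\ell,r+p}$ and $t_{F_k-1,r+p-1}$, and the final comparison $(L_i-rF_k)L_{i+2}\gtrless F_{k-2}L_i$ followed by Lemma \ref{cor-mp} (\ref{mp-g}) all match the paper's framework.
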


\section{The number of representations}

By using the table of complete residue systems, we can also find explicit formulas of the $p$-Sylvester number, which is the total number of nonnegative integers that can only be expressed in at most $p$ ways. When $p=0$, such a number is often called the Sylvester number.

\subsection{Main results when $p=1$}  

When $p=1$, we have the following.  

\begin{theorem}  
For $i\ge 3$, we have 
\begin{align}  
n_1(F_i,F_{i+2},F_{i+k})&=\frac{1}{2}(3 F_i F_{i+2}-F_i-F_{i+2}+1)\quad(k\ge i+2)\,, 
\label{eq:n1+2}\\
n_1(F_i,F_{i+2},F_{2 i+1})&=\frac{1}{2}(3 F_i F_{i+2}-F_i-F_{i+2}+1)-(2 F_i-F_k)F_{k-2}\,,\notag\\
&\qquad\qquad\qquad\qquad\qquad(k=i,i+1)\,.  
\label{eq:n1+10}
\end{align} 
When $r=\fl{(F_i-1)/F_k}\ge 1$, that is, $k\le i-1$, we have 
\begin{align}
&n_1(F_i,F_{i+2},F_{i+k})\notag\\
&=\frac{1}{2}\bigl(F_i+2 F_k-1)F_{i+2}-F_i+1\bigr)-\left(r F_i-\frac{(r-1)(r+2)}{2}F_k\right)F_{k-2}\,.
\label{eq:n1-123}  
\end{align}
\label{th:n1} 
\end{theorem}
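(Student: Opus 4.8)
The plan is to apply Lemma~\ref{cor-mp}, formula~(\ref{mp-n}), which expresses
$$
n_1(F_i,F_{i+2},F_{i+k})=\frac{1}{F_i}\sum_{j=0}^{F_i-1}m_j^{(1)}-\frac{F_i-1}{2}\,,
$$
so the entire task reduces to evaluating the sum of the elements of the $1$-Ap\'ery set ${\rm Ap}(F_i;1)$. The decisive point is that this set has already been pinned down element-by-element in the proof of Theorem~\ref{th:d1}: every $m_j^{(1)}$ equals a definite value $t_{x,y}=x F_{i+2}+y F_{i+k}$, and the exact index ranges are read off from Tables~\ref{tb:g1system}, \ref{tb:g1system01}, and \ref{tb:g1system00}. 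Hence I would write $\sum_j m_j^{(1)}=F_{i+2}\sum_{(x,y)}x+F_{i+k}\sum_{(x,y)}y$ and compute the two coordinate-sums over the relevant index set.

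First I would split along the same case division as Theorem~\ref{th:d1}, governed by $r=\fl{(F_i-1)/F_k}$. For $r=0$ (that is, $k\ge i$) the set ${\rm Ap}(F_i;1)$ lies in at most two rows: when $2\ell+1\le F_k-1$, equivalently $k\ge i+2$, it is the single contiguous run $t_{\ell+1,0},\dots,t_{2\ell+1,0}$, and summing an arithmetic progression of $x$-values gives directly the base quantity in (\ref{eq:n1+2}); when $2\ell+1\ge F_k$, equivalently $k=i$ or $k=i+1$, it splits into the tail $t_{\ell+1,0},\dots,t_{F_k-1,0}$ of the first row and the head $t_{0,1},\dots,t_{2\ell+1-F_k,1}$ of the second, and the extra $y$-contributions must collapse to the correction term $-(2F_i-F_k)F_{k-2}$ of (\ref{eq:n1+10}). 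For $r\ge 1$ (that is, $k\le i-1$) the set ${\rm Ap}(F_i;1)$ comprises the full second-block rows $0,\dots,r-2$, the partial second-block row $r-1$ up to column $F_k+\ell$, and the two partial first-block rows $t_{\ell+1,r},\dots,t_{F_k-1,r}$ and $t_{0,r+1},\dots,t_{\ell,r+1}$; summing $x$ and $y$ over precisely these ranges yields (\ref{eq:n1-123}).

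An alternative route, which leverages the known $p=0$ Sylvester value of \cite[Corollary 2]{MAR}, is to sum increments instead of raw values. For $r\ge 1$ the three groups of congruences in the proof of Theorem~\ref{th:d1} pair each $0$-Ap\'ery element with its $1$-Ap\'ery successor, and each difference $m_j^{(1)}-m_j^{(0)}$ is an explicit multiple of $F_i$: the first group contributes $F_{k-2}F_i$ on $F_i-F_k$ residue classes, the second contributes $F_i(F_{i+2}-rF_{k-2})$ on $F_k-\ell-1$ classes, and the third contributes $F_i(F_{i+2}-(r+1)F_{k-2})$ on $\ell+1$ classes. Dividing the total increment by $F_i$ gives $n_1-n_0$, whence $n_1$ follows; the two $r=0$ subcases are handled the same way with only one or two groups. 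At every step the simplifications rest on the defining relation $F_i-1=rF_k+\ell$, the identity $F_{i+k}=F_kF_{i+2}-F_{k-2}F_i$, and elementary progression sums.

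The main obstacle is not conceptual but clerical: one must track the incomplete (partial) rows at the block boundaries exactly, since an off-by-one in any column count alters the correction term, and one must confirm that the case boundaries $k\ge i+2$, $k\in\{i,i+1\}$, and $k\le i-1$ are precisely the ranges on which each closed form is asserted. I expect the $r=0$, $2\ell+1\ge F_k$ subcase to demand the most care, because there two partial rows interact through the parameter $\ell$ and must be shown to collapse to the single term $(2F_i-F_k)F_{k-2}$; a verification against the small instances $k=i$ and $k=i+1$ before committing to the general simplification would be the natural safeguard.
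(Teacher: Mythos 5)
Your primary route is exactly the paper's own proof: apply Lemma \ref{cor-mp} (\ref{mp-n}) to the $1$-Ap\'ery set whose elements are read off from Tables \ref{tb:g1system}, \ref{tb:g1system01} and \ref{tb:g1system00}, with the identical case split ($r=0$ with $2\ell+1\le F_k-1$, $r=0$ with $2\ell+1\ge F_k$, and $r\ge 1$) and the identical index ranges, then evaluate arithmetic-progression sums using $F_{i+k}=F_kF_{i+2}-F_{k-2}F_i$. Your alternative increment route is also arithmetically sound --- the group increments $F_{k-2}F_i$, $F_i(F_{i+2}-rF_{k-2})$, $F_i\bigl(F_{i+2}-(r+1)F_{k-2}\bigr)$ on $F_i-F_k$, $F_k-\ell-1$, $\ell+1$ classes respectively do reproduce (\ref{eq:n1-123}) --- but it repackages the same structural data from Theorem \ref{th:d1} rather than giving a genuinely different argument.
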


\begin{proof} 
When $r=0$ and $2\ell+1\le F_k-1$, by $F_i-1=\ell$, 
\begin{align*}  
\sum_{j=0}^{F_i-1}m_j^{(1)}&=t_{\ell+1,0}+\cdots+t_{2\ell+1,0}\\
&=\left(\frac{(2\ell+1)(2\ell+2)}{2}-\frac{\ell(\ell+1)}{2}\right)F_{i+2}\\
&=\frac{(3 F_i-1)F_i F_{i+2}}{2}\,.
\end{align*}
Hence, by Lemma \ref{cor-mp} (\ref{mp-n}), we have 
\begin{align*} 
n_1(F_i,F_{i+2},F_{i+k})&=\frac{(3 F_i-1)F_{i+2}}{2}-\frac{F_i-1}{2}\\
&=\frac{1}{2}(3 F_i F_{i+2}-F_i-F_{i+2}+1)\,, 
\end{align*}
which is (\ref{eq:n1+2}). 

When $r=0$ and $2\ell+1\ge F_k$, by $F_i-1=\ell$, 
\begin{align*}  
\sum_{j=0}^{F_i-1}m_j^{(1)}&=(t_{\ell+1,0}+\cdots+t_{F_k-1,0})+(t_{0,1}+\cdots+t_{2\ell+1-F_k,1})\\
&=\left(\frac{(F_k-1)F_k}{2}-\frac{\ell(\ell+1)}{2}\right)F_{i+2}\\
&\quad +\frac{(2\ell+1-F_k)(2\ell+2-F_k)}{2}F_{i+2}+(2\ell+2-F_k)F_{i+k}\\
&=\left(\frac{(3 F_i-1)F_i}{2}-2 F_i F_k+F_k^2\right)F_{i+2}+(2 F_i-F_k)F_{i+k}\,.
\end{align*}
Since $F_{i+k}=F_{i+2}F_k-F_i F_{k-2}$, 
$$
\sum_{j=0}^{F_i-1}m_j^{(1)}=\left(\frac{(3 F_i-1)F_{i+2}}{2}-(2 F_i-F_k)F_{k-2}\right)F_i\,.
$$ 
Hence, by Lemma \ref{cor-mp} (\ref{mp-n}), we have 
\begin{align*} 
n_1(F_i,F_{i+2},F_{i+k})&=\frac{(3 F_i-1)F_{i+2}}{2}-(2 F_i-F_k)F_{k-2}-\frac{F_i-1}{2}\\
&=\frac{1}{2}(3 F_i F_{i+2}-F_i-F_{i+2}+1)-(2 F_i-F_k)F_{k-2}\,, 
\end{align*}
which is (\ref{eq:n1+10}). 

When $r\ge 1$, by $F_i-1=r F_k+\ell$, we have 
\begin{align*}  
&\sum_{j=0}^{F_i-1}m_j^{(1)}\\
&=\sum_{h=0}^{r-2}(t_{F_k,h}+\cdots+t_{2 F_k-1,h})
+(t_{F_k,r-1}+\cdots+t_{F_k+\ell,r-1})\\
&\quad +(t_{\ell+1,r}+\cdots+t_{F_k-1,r})+(t_{0,r+1}+\cdots+t_{\ell,r+1})\\
&=(r-1)\left(\frac{(2 F_k-1)(2 F_k)}{2}-\frac{(F_k-1)F_k}{2}\right)F_{i+2}+\frac{(r-2)(r-1)}{2}F_k F_{i+k}\\
&\quad +\left(\frac{(F_k+\ell)(F_k+\ell+1)}{2}-\frac{(F_k-1)F_k}{2}\right)F_{i+2}+(\ell+1)(r-1)F_{i+k}\\
&\quad +\left(\frac{(F_k-1)F_k}{2}-\frac{\ell(\ell+1)}{2}\right)F_{i+2}+(F_k-1-\ell)r F_{i+k}\\
&\quad +\frac{\ell(\ell+1)}{2}F_{i+2}+(\ell+1)(r+1)F_{i+k}\\
&=\frac{1}{2}\left((r-1)(3 F_k-1)F_k+\bigl(F_i-(r-1)F_k-1\bigr)\bigl(F_i-(r-1)F_k\bigr)\right)F_{i+2}\\
&\quad +\left(\frac{(r-2)(r-1)}{2}F_k+r\bigl(F_i-(r-1)F_k\bigr)\right)F_{i+k}\\
&=\frac{1}{2}(F_i+2 F_k-1)F_i F_{i+2}\\
&\quad -\left(r F_i-\frac{(r-1)(r+2)}{2}F_k\bigr)\right)F_{k-2}F_i\,. 
\end{align*}
Hence, by Lemma \ref{cor-mp} (\ref{mp-n}), we have 
\begin{align*} 
&n_1(F_i,F_{i+2},F_{i+k})\\
&=\frac{1}{2}(F_i+2 F_k-1)F_{i+2}-\left(r F_i-\frac{(r-1)(r+2)}{2}F_k\right)F_{k-2}-\frac{F_i-1}{2}\\
&=\frac{1}{2}\bigl((F_i+2 F_k-1)F_{i+2}-F_i+1\bigr)-\left(r F_i-\frac{(r-1)(r+2)}{2}F_k\right)F_{k-2}\,, 
\end{align*}
which is (\ref{eq:n1-123}). 
\end{proof}

\subsection{The case $p=2$}

When $p=2$, we have the following.  

\begin{theorem}  
For $i\ge 3$, we have 
\begin{align}  
n_2(F_i,F_{i+2},F_{i+k})&=\frac{1}{2}(5 F_i F_{i+2}-F_i-F_{i+2}+1)\quad(k\ge i+3)\,, 
\label{eq:n2+3}\\
n_2(F_i,F_{i+2},F_{2 i+2})&=\frac{1}{2}\bigl((7 F_{i+2}-6 F_i-1)F_{i}-F_{i+2}+1\bigr)\,,
\label{eq:n2+2}\\
n_2(F_i,F_{i+2},F_{2 i+1})&=\frac{1}{2}\bigl((7 F_{i+2}-8 F_i-1)F_{i}-F_{i+2}+1\bigr)\,, 
\label{eq:n2+1}\\
n_2(F_i,F_{i+2},F_{2 i})&=\frac{1}{2}(3 F_i F_{i+2}-F_i-F_{i+2}+1)\,, 
\label{eq:n2+0}\\
n_2(F_i,F_{i+2},F_{2 i-1})&=\frac{1}{2}\bigl((170 F_i-1)F_i+(24 F_{i+2}-125 F_i-1)F_{i+2}+1\bigr)\,. 
\label{eq:n2-1}
\end{align} 
When $r=\fl{(F_i-1)/F_k}\ge 2$, that is, $k\le i-2$, we have 
\begin{align}
&n_2(F_i,F_{i+2},F_{i+k})\notag\\
&=\frac{1}{2}\bigl((F_i+4 F_k-1)F_{i+2}-F_i+1\bigr)-\frac{1}{2}\bigl(2 r F_i-(r+3)(r-2)F_k\bigr)F_{k-2}\,. 
\label{eq:n2-234}
\end{align} 
\label{th:n2} 
\end{theorem}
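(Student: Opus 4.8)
The plan is to follow verbatim the strategy of the proof of Theorem~\ref{th:n1}: by Lemma~\ref{cor-mp}~(\ref{mp-n}) the $2$-Sylvester number is
\begin{equation*}
n_2(F_i,F_{i+2},F_{i+k})=\frac{1}{F_i}\sum_{j=0}^{F_i-1}m_j^{(2)}-\frac{F_i-1}{2}\,,
\end{equation*}
so the entire problem reduces to evaluating the sum $\sum_{j=0}^{F_i-1}m_j^{(2)}$ of the elements of the $2$-Apéry set. The key point is that no new structural work is needed: the proof of Theorem~\ref{th:d2} has already determined, case by case, the exact list of representatives forming ${\rm Ap}(F_i;2)$, each being one of the numbers $t_{x,y}=(x+y F_k)F_{i+2}-y F_{k-2}F_i$. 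Hence I simply sum $t_{x,y}$ over the index set attached to each case by Theorem~\ref{th:d2} and feed the result into the displayed identity.

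First I would write each such sum as $\sum t_{x,y}=\bigl(\sum(x+y F_k)\bigr)F_{i+2}-\bigl(\sum y\bigr)F_{k-2}F_i$, which already separates the $F_{i+2}$-part from the $F_{k-2}F_i$-part appearing in the target expressions. In every case the index set is a union of full lines of length $F_k$ together with a few partial lines, so both $\sum(x+yF_k)$ and $\sum y$ reduce to the elementary formulas for $\sum_{x=a}^{b}x$ and $\sum_{y}y$; after collecting terms and reducing by the single relation $F_i-1=r F_k+\ell$, the expression collapses to the stated closed form. Matching cases: Case~1(1) gives (\ref{eq:n2+3}); Case~1(2), i.e.\ $k=i+2$, gives (\ref{eq:n2+2}); Case~1(3) gives (\ref{eq:n2+1}) for $k=i+1$ and (\ref{eq:n2+0}) for $k=i$; Case~2, i.e.\ $k=i-1$ (which forces $r=1$), gives (\ref{eq:n2-1}); and Case~3 ($r\ge 2$, i.e.\ $k\le i-2$) gives the general formula (\ref{eq:n2-234}). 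For (\ref{eq:n2-1}) one extra step is needed: since $F_k=F_{i-1}=F_{i+2}-2F_i$ and $F_{k-2}=F_{i-3}=2F_{i+2}-5F_i$ are no longer free parameters, substituting these eliminates $F_k,F_{k-2}$ and produces the expression quadratic in $F_{i+2}$ with the numerical coefficients shown.

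The main obstacle is Case~3, where ${\rm Ap}(F_i;2)$ is spread across the first, second and third blocks of Table~\ref{tb:g2system300}: it consists of $r-2$ full lines of length $F_k$ in the third block (rows $y=0,\dots,r-3$) together with partial lines in rows $y=r-2,r-1,r,r+1,r+2$, of lengths $\ell+1$, $F_k-\ell-1$, $\ell+1$, $F_k-\ell-1$, $\ell+1$ respectively. The delicate part is the second-coordinate sum: $\sum y$ is an arithmetic series in the row index weighted by these unequal line lengths, and the full-block contribution $\sum_{y=0}^{r-3}y\,F_k$ must be combined with the five partial-line contributions. I expect the coefficient $\tfrac12\bigl(2rF_i-(r+3)(r-2)F_k\bigr)$ of $F_{k-2}F_i$ (namely the total $\sum y$) to surface only after this combination is simplified through $F_i-1=rF_k+\ell$; confirming that all the $\ell$-dependence cancels and leaves precisely this form is where the routine but lengthy bookkeeping is concentrated. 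The remaining cases are structurally the same but involve only one or two lines and are correspondingly shorter.
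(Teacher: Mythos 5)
Your proposal is correct and takes essentially the same route as the paper's own proof: the paper likewise feeds the case-by-case Ap\'ery-set structures established in the proof of Theorem \ref{th:d2} into Lemma \ref{cor-mp} (\ref{mp-n}), evaluates each $\sum_{j}m_j^{(2)}$ by elementary arithmetic series (writing $t_{x,y}=xF_{i+2}+yF_{i+k}$ and converting at the end via $F_{i+k}=F_kF_{i+2}-F_{k-2}F_i$, which is equivalent to your split into an $F_{i+2}$-part and an $F_{k-2}F_i$-part), and obtains the displayed forms for $k=i+2,i+1,i,i-1$ by exactly the Fibonacci substitutions you indicate, with the $r\ge 2$ case using the same five partial lines of lengths $\ell+1$, $F_k-\ell-1$, $\ell+1$, $F_k-\ell-1$, $\ell+1$ plus $r-2$ full lines. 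The only (cosmetic) divergence is which subcase handles $k=i$: you place it in Case 1(3), while the paper derives (\ref{eq:n2+0}) from its $r=1$ subcase, and both computations give the identical formula.
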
   

\begin{proof}
When $r=0$ and $F_k\ge 3\ell+3$, by $F_i-1=\ell$, we have 
\begin{align*}  
\sum_{j=0}^{F_i-1}m_j^{(2)}
&=t_{2\ell+2,0}+\cdots+t_{3\ell+2,0}\\
&=\left(\frac{(3\ell+2)(3\ell+3)}{2}-\frac{(2\ell+1)(2\ell+2)}{2}\right)F_{i+2}\\ 
&=\frac{1}{2}(5 F_i-1)F_i F_{i+2}\,. 
\end{align*} 
Hence, by Lemma \ref{cor-mp} (\ref{mp-n}), we have 
\begin{align*} 
n_2(F_i,F_{i+2},F_{i+k})&=\frac{1}{2}(5 F_i-1)F_{i+2}-\frac{F_i-1}{2}\\
&=\frac{1}{2}(5 F_i F_{i+2}-F_i-F_{i+2}+1)\,, 
\end{align*}  
which is (\ref{eq:n2+3}). 

When $r=0$ and $2\ell+2\le F_k\le 3\ell+2$, by $F_{i+k}=F_{i+2}F_k-F_i F_{k-2}$, we have 
\begin{align*}  
&\sum_{j=0}^{F_i-1}m_j^{(2)}\\
&=(t_{2\ell+2,0}+\cdots+t_{F_k-1,0})+(t_{0,1}+\cdots+t_{3\ell+2-F_k,1})\\
&=\left(\frac{(F_k-1)F_k}{2}-\frac{(2\ell+1)(2\ell+2)}{2}\right)F_{i+2}\\ 
&\quad +\frac{(3\ell+2-F_k)(3\ell+3-F_k)}{2}F_{i+2}+(3\ell+3-F_k)F_{i+k}\\
&=\frac{1}{2}(5 F_i-1)F_i F_{i+2}-(3 F_i-F_k)F_i F_{k-2}\,. 
\end{align*}
Hence, we have 
\begin{align*} 
n_2(F_i,F_{i+2},F_{i+k})&=\frac{1}{2}(5 F_i-1)F_{i+2}-(3 F_i-F_k)F_{k-2}-\frac{F_i-1}{2}\\
&=\frac{1}{2}(5 F_i F_{i+2}-F_i-F_{i+2}+1)-(3 F_i-F_k)F_{k-2}\,. 
\end{align*}  
This case occurs only when $k=i+2$. Hence, we get (\ref{eq:n2+2}). 

When $r=0$ and $F_k\le 2\ell+1$, we have 
\begin{align*}  
&\sum_{j=0}^{F_i-1}m_j^{(2)}
=(t_{F_k,0}+\cdots+t_{2\ell+1,0})+(t_{2\ell+2-F_k,1}+\cdots+t_{\ell,1})\\
&=\left(\frac{(2\ell+1)(2\ell+2)}{2}-\frac{(F_k-1)F_k}{2}\right)F_{i+2}\\ 
&\quad +\left(\frac{\ell(\ell+1)}{2}-\frac{(2\ell+1-F_k)(2\ell+2-F_k)}{2}\right)F_{i+2}+(F_k-\ell-1)F_{i+k}\\
&=\frac{1}{2}(F_i+2 F_k-1)F_i F_{i+2}-(F_k-F_i)F_i F_{k-2}\,. 
\end{align*}
Hence, we have 
\begin{align*} 
n_2(F_i,F_{i+2},F_{i+k})&=\frac{1}{2}(F_i+2 F_k-1)F_{i+2}-(F_k-F_i)F_{k-2}-\frac{F_i-1}{2}\\
&=\frac{1}{2}\bigl((F_i+2 F_k-1)F_{i+2}-F_i+1\bigr)-(F_k-F_i)F_{k-2}\,. 
\end{align*} 
This case occurs only when $k=i+1$. Hence, by rewriting we get (\ref{eq:n2+1}). 

When $r=1$ and $2\ell+2\le F_k$, by $F_i-1=F_k+\ell$, we have 
\begin{align*}  
&\sum_{j=0}^{F_i-1}m_j^{(2)}\\
&=(t_{F_k+\ell+1,0}+\cdots+t_{2 F_k-1,0})+(t_{F_k,1}+\cdots+t_{F_k+\ell,1})\\
&\quad +(t_{\ell+1,2}+\cdots+t_{2\ell+1,2})\\ 
&=\left(\frac{(2 F_k-1)(2 F_k)}{2}-\frac{(F_k+\ell)(F_k+\ell+1)}{2}\right)F_{i+2}\\ 
&\quad +\left(\frac{(F_k+\ell)(F_k+\ell+1)}{2}-\frac{(F_k-1)F_k}{2}\right)F_{i+2}+(\ell+1)F_{i+k}\\
&\quad +\left(\frac{(2\ell+1)(2\ell+2)}{2}-\frac{\ell(\ell+1)}{2}\right)F_{i+2}+2(\ell+1)F_{i+k}\\
&=\frac{1}{2}(3 F_i-1)F_i F_{i+2}-3(F_i-F_k)F_i F_{k-2}\,. 
\end{align*}
Hence, we have 
\begin{align*} 
n_2(F_i,F_{i+2},F_{i+k})&=\frac{1}{2}(3 F_i-1)F_{i+2}-3(F_i-F_k)F_{k-2}-\frac{F_i-1}{2}\\
&=\frac{1}{2}\bigl(3 F_i F_{i+2}-F_i-F_{i+2}+1\bigr)-3(F_i-F_k)F_{k-2}\,. 
\end{align*} 
This case occurs only when $k=i$. Hence, we get (\ref{eq:n2+0}). 

When $r=1$ and $2\ell+1\ge F_k$, we have 
\begin{align*}  
&\sum_{j=0}^{F_i-1}m_j^{(2)}\\
&=(t_{F_k+\ell+1,0}+\cdots+t_{2 F_k-1,0})+(t_{F_k,1}+\cdots+t_{F_k+\ell,1})\\
&\quad +(t_{\ell+1,2}+\cdots+t_{F_k-1,2})+(t_{0,3}+\cdots+t_{2\ell+1-F_k,3})\\ 
&=\left(\frac{(2 F_k-1)(2 F_k)}{2}-\frac{(F_k+\ell)(F_k+\ell+1)}{2}\right)F_{i+2}\\ 
&\quad +\left(\frac{(F_k+\ell)(F_k+\ell+1)}{2}-\frac{(F_k-1)F_k}{2}\right)F_{i+2}+(\ell+1)F_{i+k}\\
&\quad +\left(\frac{(F_k-1)F_k}{2}-\frac{\ell(\ell+1)}{2}\right)F_{i+2}+2(F_k-\ell-1)F_{i+k}\\
&\quad +\frac{(2\ell+1-F_k)(2\ell+2-F_k)}{2}F_{i+2}+3(2\ell+2-F_k)F_{i+k}\\
&=\frac{1}{2}(3 F_i-1)F_i F_{i+2}-(5 F_i-6 F_k)F_i F_{k-2}\,. 
\end{align*}
Hence, we have 
\begin{align*} 
n_2(F_i,F_{i+2},F_{i+k})&=\frac{1}{2}(3 F_i-1)F_{i+2}-(5 F_i-6 F_k)F_{k-2}-\frac{F_i-1}{2}\\
&=\frac{1}{2}\bigl(3 F_i F_{i+2}-F_i-F_{i+2}+1\bigr)-(5 F_i-6 F_k)F_{k-2}\,. 
\end{align*} 
This case occurs only when $k=i-1$. Hence, after rewriting, we get (\ref{eq:n2-1}).

When $r\ge 2$, by $F_i-1=r F_k+\ell$, we have 
\begin{align*}  
&\sum_{j=0}^{F_i-1}m_j^{(2)}\\
&=\sum_{h=0}^{r-3}(t_{2 F_k,h}+\cdots+t_{3 F_k-1,h})
+(t_{2 F_k,r-2}+\cdots+t_{2 F_k+\ell,r-2})\\
&\quad +(t_{F_k+\ell+1,r-1}+\cdots+t_{2 F_k-1,r-1})+(t_{F_k,r}+\cdots+t_{F_k+\ell,r})\\
&\quad +(t_{\ell+1,r+1}+\cdots+t_{F_k-1,r+1})+(t_{0,r+2}+\cdots+t_{\ell,r+2})\\
&=(r-2)\left(\frac{(3 F_k-1)(3 F_k)}{2}-\frac{(2 F_k-1)(2 F_k)}{2}\right)F_{i+2}+\frac{(r-3)(r-2)}{2}F_k F_{i+k}\\
&\quad +\left(\frac{(2 F_k+\ell)(2 F_k+\ell+1)}{2}-\frac{(2 F_k-1)(2 F_k)}{2}\right)F_{i+2}+(\ell+1)(r-2)F_{i+k}\\
&\quad +\left(\frac{(2 F_k-1)(2 F_k)}{2}-\frac{(F_k+\ell)(F_k+\ell+1)}{2}\right)F_{i+2}+(F_k-\ell-1)(r-1)F_{i+k}\\
&\quad +\left(\frac{(F_k+\ell)(F_k+\ell+1)}{2}-\frac{(F_k-1)F_k}{2}\right)F_{i+2}+(\ell+1)r F_{i+k}\\
&\quad +\left(\frac{(F_k-1)F_k}{2}-\frac{\ell(\ell+1)}{2}\right)F_{i+2}+(F_k-\ell-1)(r+1)F_{i+k}\\
&\quad +\frac{\ell(\ell+1)}{2}F_{i+2}+(\ell+1)(r+2)F_{i+k}\\
&=\frac{1}{2}(F_i+4 F_k-1)F_i F_{i+2}-\frac{1}{2}\bigl(2 r F_i-(r+3)(r-2)F_k\bigr)F_i F_{k-2}\,. 
\end{align*}
Hence, by Lemma \ref{cor-mp} (\ref{mp-n}), we have 
\begin{align*} 
&n_2(F_i,F_{i+2},F_{i+k})\\
&=\frac{1}{2}(F_i+4 F_k-1)F_{i+2}-\frac{1}{2}\bigl(2 r F_i-(r+3)(r-2)F_k\bigr)F_{k-2}
-\frac{F_i-1}{2}\\
&=\frac{1}{2}\bigl((F_i+4 F_k-1)F_{i+2}-F_i+1\bigr)-\frac{1}{2}\bigl(2 r F_i-(r+3)(r-2)F_k\bigr)F_{k-2}\,, 
\end{align*} 
which is (\ref{eq:n2-234}). 
\end{proof}

\subsection{The case $p=3$} 

When $p=3$, we have the following. The process is similar, and the proof is omitted.  

\begin{theorem}  
For $i\ge 3$, we have 
\begin{align}  
n_3(F_i,F_{i+2},F_{i+k})&=\frac{1}{2}(7 F_i F_{i+2}-F_i-F_{i+2}+1)\quad(k\ge i+3)\,, 
\notag\\
n_3(F_i,F_{i+2},F_{2 i+2})&=(F_{i}-1)F_{i+2}+F_{2 i+2}-F_i\,, 
\notag\\
n_3(F_i,F_{i+2},F_{2 i+1})&=(F_{i}+F_{i-2}-1)F_{i+2}+F_{2 i+1}-F_i\,, 
\notag\\
n_3(F_i,F_{i+2},F_{2 i})&=\frac{1}{2}\bigl((5 F_i-1)F_{i+2}-F_i+1\bigr)-2 F_{i}F_{i-2}\,, 
\notag\\
n_3(F_i,F_{i+2},F_{2 i-1})&=\frac{1}{2}\bigl((F_i+4 F_{i-1}-1)F_{i+2}-F_i+1\bigr)-2 F_{i-1}F_{i-3}\quad(i\ge 4)\,, 
\notag\\
n_3(F_i,F_{i+2},F_{2 i-2})&=\frac{1}{2}\bigl((3 F_i-1)F_{i+2}-F_i+1\bigr)-(8 F_i-15 F_{i-2})F_{i-4}\quad(i\ge 5)\,. 
\label{eq:n3-2}
\end{align} 
When $r=\fl{(F_i-1)/F_k}\ge 3$, that is, $k\le i-3$, we have 
\begin{align*} 
&n_3(F_i,F_{i+2},F_{i+k})\notag\\
&=\frac{1}{2}\bigl((F_i+6 F_k-1)F_{i+2}-F_i+1\bigr)-\frac{1}{2}\bigl(2 r F_i-(r+4)(r-3)F_k\bigr)F_{k-2}\,. 
\end{align*}
\label{th:n3} 
\end{theorem}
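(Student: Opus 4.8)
The plan is to apply Lemma \ref{cor-mp}, formula (\ref{mp-n}), with $a_1=F_i$, so that
$$n_3(F_i,F_{i+2},F_{i+k})=\frac{1}{F_i}\sum_{j=0}^{F_i-1}m_j^{(3)}-\frac{F_i-1}{2},$$
exactly as was done for $p=1$ and $p=2$ in Theorems \ref{th:n1} and \ref{th:n2}. The entire task therefore reduces to evaluating the sum $\sum_{j=0}^{F_i-1}m_j^{(3)}$ of the elements of the fourth least complete residue system ${\rm Ap}(F_i;3)$, whose explicit block structure has already been determined in the proof of Theorem \ref{th:d3} and is illustrated in Table \ref{tb:g3system400}.

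First I would split into the same cases, keyed to the value of $r=\fl{(F_i-1)/F_k}$, that appear there: the small-$r$ configurations yielding the boundary cases $k=2i+2,\dots,2i-2$, and the uniform case $r\ge 3$. In every case the members $m_j^{(3)}$ are a list of entries $t_{x,y}=(x+y F_k)F_{i+2}-y F_{k-2}F_i$ arranged into consecutive horizontal runs within the blocks of Table \ref{tb:g3system400}. For a run with fixed $y$ and $x$ ranging over an interval $[a,b]$, summing contributes $\bigl(\tfrac{b(b+1)}{2}-\tfrac{(a-1)a}{2}\bigr)F_{i+2}$ to the $F_{i+2}$-part together with $(b-a+1)\,y\,F_{i+k}$ to the $F_{i+k}$-part, and I would accumulate these two parts separately over all runs.

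The decisive simplification is the Fibonacci identity $F_{i+k}=F_{i+2}F_k-F_i F_{k-2}$ already used in the earlier Sylvester computations: substituting it collapses every $F_{i+k}$-contribution into an $F_{i+2}$-term plus a multiple of $F_i F_{k-2}$, so that the whole sum factors as $F_i$ times an expression of the shape $\tfrac12(F_i+6F_k-1)F_{i+2}-\tfrac12\bigl(2 r F_i-(r+4)(r-3)F_k\bigr)F_{k-2}$. Dividing by $F_i$ and subtracting $\tfrac{F_i-1}{2}$ then produces the stated closed forms; the quadratic coefficient $(r+4)(r-3)$ emerges from the telescoping of the partial sums of the line-lengths across the $r$-dependent number of full lines in the blocks.

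The main obstacle is purely the bookkeeping in the case $r\ge 3$: one must enumerate correctly all the horizontal runs of ${\rm Ap}(F_i;3)$ — the full lines of length $F_k$ in the shifted blocks, the two short lines of lengths $\ell+1$ and $F_k-\ell-1$ at each boundary, and the trailing fragments — and keep the $y$-indices aligned so that the $F_{i+k}$-coefficients sum to the correct quadratic in $r$. The boundary cases $k=2i-2,\dots,2i+2$ are handled the same way but require first identifying, as in Cases 1(1)--2(2) of Theorem \ref{th:d2}, which particular sub-configuration of the Ap\'ery set occurs for that small value of $r$; once the correct run decomposition is fixed, each is a short finite computation. Since the process is entirely parallel to the proofs of Theorems \ref{th:n1} and \ref{th:n2}, only now with one additional block, the detailed arithmetic is omitted in the same spirit.
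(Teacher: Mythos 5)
Your proposal is correct and follows essentially the same route as the paper: the paper itself omits the proof of Theorem \ref{th:n3} precisely because it is "similar" to the computations for Theorems \ref{th:n1} and \ref{th:n2} (and to the sketch for Theorem \ref{th:np}), namely summing the runs of $t_{x,y}$ in the fourth Ap\'ery set ${\rm Ap}(F_i;3)$ read off from the block structure of Theorem \ref{th:d3}, collapsing via $F_{i+k}=F_{i+2}F_k-F_iF_{k-2}$, and applying Lemma \ref{cor-mp} (\ref{mp-n}). Your run-by-run summation formula and the identification of the coefficient $(r+4)(r-3)$ as the $p=3$ instance of $(r+p+1)(r-p)$ match the paper's framework exactly.
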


\subsection{General $p$ case}  

We can continue to obtain explicit formulas of $n_p(F_i,F_{i+2},F_{i+k})$ for $p=4,5,\dots$. However, the situation becomes more complicated. We need more case-by-case discussions.

For general $p$, when $r\ge p$, we can have an explicit formula.  

\begin{theorem}  
Let $i\ge 3$ and $p$ be a nonnegative integer.   
When $r=\fl{(F_i-1)/F_k}\ge p$, we have 
\begin{align} 
&n_p(F_i,F_{i+2},F_{i+k})\notag\\
&=\frac{1}{2}\bigl((F_i+2 p F_k-1)F_{i+2}-F_i+1\bigr)-\frac{1}{2}\bigl(2 r F_i-(r+p+1)(r-p)F_k\bigr)F_{k-2}\,. 
\label{eq:np-ppp}
\end{align}
\label{th:np} 
\end{theorem}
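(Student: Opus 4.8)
The plan is to apply Lemma~\ref{cor-mp}, formula~(\ref{mp-n}), with $a_1=F_i$, so that the entire task reduces to evaluating the single sum $\sum_{j=0}^{F_i-1}m_j^{(p)}$ taken over the $(p+1)$-st least complete residue system ${\rm Ap}(F_i;p)$. Unlike the $p$-Frobenius number, the $p$-Sylvester number does not depend on \emph{which} element of ${\rm Ap}(F_i;p)$ is maximal; this is exactly why the two subcases $(F_i-rF_k)F_{i+2}\ge F_{k-2}F_i$ and $(F_i-rF_k)F_{i+2}<F_{k-2}F_i$ that split the Frobenius formulas collapse into a single expression here. The essential input is therefore the explicit description of the whole set ${\rm Ap}(F_i;p)$ in the regime $r\ge p$, which I would obtain by induction on $p$, generalizing the tables for $p=0,1,2,3$ built in the proofs of Theorems~\ref{th:d1}, \ref{th:d2} and \ref{th:d3}.

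First I would prove, by induction on $p$ using the shift congruences $t_{x,y}\equiv t_{F_k+x,y-1}\pmod{F_i}$ together with their boundary analogues established in those proofs, that for $r\ge p$ the set ${\rm Ap}(F_i;p)$ consists of: (i) the $r-p$ full rows $t_{pF_k,h},\dots,t_{(p+1)F_k-1,h}$ for $0\le h\le r-p-1$ lying in the $(p+1)$-st block; together with (ii) a staircase of $2p+1$ partial rows occupying the row indices $r-p,\dots,r+p$, whose lengths alternate between $\ell+1$ (there are $p+1$ such rows, the top one being $t_{pF_k,r-p},\dots,t_{pF_k+\ell,r-p}$) and $F_k-\ell-1$ (there are $p$ such rows). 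A count then gives $(r-p)F_k+(p+1)(\ell+1)+p(F_k-\ell-1)=rF_k+\ell+1=F_i$, confirming that the listed elements do form a complete residue system modulo $F_i$.

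With the structure in hand, I would write each element as $m_j^{(p)}=t_{x,y}=xF_{i+2}+yF_{i+k}$ and compute the two index sums $S_x=\sum x$ and $S_y=\sum y$ by summing arithmetic progressions over the full rows and over the staircase, eliminating $\ell$ via $\ell+1=F_i-rF_k$. The row-index sum collapses cleanly: the $p+1$ even and $p$ odd partial rows combine to $r(\ell+1)+prF_k$, and adding the full-row contribution $F_k\binom{r-p}{2}$ yields, after substituting $\ell+1=F_i-rF_k$,
\begin{equation*}
S_y=rF_i-\frac{(r+p+1)(r-p)}{2}F_k\,.
\end{equation*}
Substituting $F_{i+k}=F_kF_{i+2}-F_{k-2}F_i$ gives $\sum_j m_j^{(p)}=F_{i+2}(S_x+F_kS_y)-F_iF_{k-2}S_y$; the term $-F_iF_{k-2}S_y$ already reproduces the $F_{k-2}$-part of the claimed formula, while the routine evaluation of $S_x+F_kS_y$ supplies the coefficient $\tfrac12(F_i+2pF_k-1)F_i$ of $F_{i+2}$. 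Dividing by $F_i$ and subtracting $(F_i-1)/2$ as in~(\ref{mp-n}) then produces the stated expression.

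The main obstacle is twofold. The first is the inductive bookkeeping of Step~1: one must check that at each increment of $p$ the rightmost full block together with the bottom partial rows migrate one block to the right and one row down exactly as in the $p=2,3$ tables, so that the staircase gains precisely one row of each length while remaining confined to blocks $1$ through $p+1$; this is where the hypothesis $r\ge p$ enters, guaranteeing enough full rows to feed the shift (when $r=p$ there are simply no full rows). The second is the algebraic verification of the factored form $(r+p+1)(r-p)$ appearing in $S_y$, which rests on the identity $(r+p+1)(r-p)=r(r+1)-p(p+1)$; this is the crux that makes the $F_{k-2}$-coefficient close up into the clean product stated in the theorem.
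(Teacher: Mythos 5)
Your proposal is correct and follows essentially the same route as the paper's (sketched) proof: both identify ${\rm Ap}(F_i;p)$ for $r\ge p$ as $r-p$ full rows in the $(p+1)$-st block plus a staircase of $p+1$ rows of length $\ell+1$ and $p$ rows of length $F_k-\ell-1$, sum the elements $t_{x,y}$, substitute $F_{i+k}=F_kF_{i+2}-F_{k-2}F_i$, and apply Lemma \ref{cor-mp} (\ref{mp-n}); your separation into $S_x$ and $S_y$ and your verified value $S_y=rF_i-\tfrac{(r+p+1)(r-p)}{2}F_k$ agree exactly with the paper's row-by-row computation. Your explicit induction on $p$ for the Ap\'ery-set structure is a point the paper leaves implicit in its sketch, but it is the intended justification rather than a different method.
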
 

\noindent 
{\it Remark.}  
When $p=0$, Theorem \ref{th:np} reduces to \cite[Corollary 2]{MAR}.    

\begin{proof}[Sketch of the proof of Theorem \ref{th:np}] 
We have 
\begin{align*}  
&\sum_{j=0}^{F_i-1}m_j^{(p)}\\
&=\sum_{h=0}^{r-p-1}(t_{p F_k,h}+\cdots+t_{(p+1)F_k-1,h})\\
&\quad +(t_{p F_k,r-p}+\cdots+t_{p F_k+\ell,r-p})\\
&\quad +(t_{(p-1)F_k+\ell+1,r-p+1}+\cdots+t_{(p-1)F_k-1,r-p+1})\\
&\quad +(t_{(p-1)F_k,r-p+2}+\cdots+t_{(p-1)F_k+\ell,r-p+2})\\
&\quad +\cdots\\ 
&\quad +(t_{\ell+1,r+p-1}+\cdots+t_{F_k-1,r+p-1})+(t_{0,r+p}+\cdots+t_{\ell,r+p})\\
&=\frac{1}{2}\bigl((r-p)((2 p+1)F_k-1)F_k+(F_i-(r-p)F_k-1)(F_i-(r-p)F_k)\bigr)F_{i+2}\\
&\quad +\left(\frac{(r-p-1)(r-p)}{2}F_k+r(F_i-(r-p)F_k)\right)F_{i+k}\\
&=\frac{1}{2}(F_i+2 p F_k-1)F_i F_{i+2}-\frac{1}{2}\bigl(2 r F_i-(r+p+1)(r-p)F_k\bigr)F_i F_{k-2}\,. 
\end{align*}
Hence, by Lemma \ref{cor-mp} (\ref{mp-n}), we have 
\begin{align*} 
&n_p(F_i,F_{i+2},F_{i+k})\\
&=\frac{1}{2}(F_i+2 p F_k-1)F_{i+2}-\frac{1}{2}\bigl(2 r F_i-(r+p+1)(r-p)F_k\bigr)F_{k-2}
-\frac{F_i-1}{2}\\
&=\frac{1}{2}\bigl((F_i+2 p F_k-1)F_{i+2}-F_i+1\bigr)-\frac{1}{2}\bigl(2 r F_i-(r+p+1)(r-p)F_k\bigr)F_{k-2}\,,  
\end{align*} 
which is (\ref{eq:np-ppp}). 
\end{proof}

\section{Example}  

Consider the Fibonacci triple $(F_6,F_8,F_{10})$. Since $F_6-1=2 F_4+1$, we see that $r=2$ and $\ell=1$. Then, we can construct the first least set, the second least  and 3rd, 4th and 5th least sets of the complete residue systems as follows.  
\begin{align*}
\ctext{1}{\rm Ap} (F_6;0)&=\{0,21,42,55,76,97,110,131\}\pmod{F_6}\,,\\ 
\ctext{2}{\rm Ap} (F_6;1)&=\{63,84,105,118,139,152,165,186\}\pmod{F_6}\,,\\ 
\ctext{3}{\rm Ap} (F_6;2)&=\{126,147,160,173,194,207,220,241\}\pmod{F_6}\,,\\ 
\ctext{4}{\rm Ap} (F_6;3)&=\{168,181,202,215,228,249,262,275\}\pmod{F_6}\,,\\ 
\ctext{5}{\rm Ap} (F_6;4)&=\{189,210,223,236,257,270,283,296\}\pmod{F_6}\,.\\ 
\end{align*}

\begin{table}[htbp]
  \centering
\scalebox{0.7}{
\begin{tabular}{ccccccccccc}
&&\multicolumn{1}{c|}{}&&&\multicolumn{1}{c|}{}&&&\multicolumn{1}{c|}{}&&\\ 
\cline{1-2}\cline{3-4}\cline{5-6}\cline{7-8}\cline{9-10}\cline{11-11}
\multicolumn{1}{|c}{$0$}&$21$&\multicolumn{1}{c|}{$42$}&$63$&$84$&\multicolumn{1}{c|}{$105$}&$126$&\multicolumn{1}{c|}{$147$}&\multicolumn{1}{c|}{$168$}&$189$&\multicolumn{1}{c|}{$210$}\\ 
\cline{6-7}\cline{8-9}\cline{10-11}
\multicolumn{1}{|c}{$55$}&$76$&\multicolumn{1}{c|}{$97$}&$118$&\multicolumn{1}{c|}{$139$}&\multicolumn{1}{c|}{$160$}&$181$&\multicolumn{1}{c|}{$202$}&\multicolumn{1}{c|}{$223$}&&\\ 
\cline{3-4}\cline{5-6}\cline{7-8}\cline{9-9} 
\multicolumn{1}{|c}{$110$}&\multicolumn{1}{c|}{$131$}&\multicolumn{1}{c|}{$152$}&$173$&\multicolumn{1}{c|}{$194$}&\multicolumn{1}{c|}{$215$}&$236$&\multicolumn{1}{c|}{$257$}&\multicolumn{1}{c|}{}&&\\ 
\cline{1-2}\cline{3-4}\cline{5-6}\cline{7-8}
\multicolumn{1}{|c}{$165$}&\multicolumn{1}{c|}{$186$}&\multicolumn{1}{c|}{$207$}&$228$&\multicolumn{1}{c|}{$249$}&\multicolumn{1}{c|}{$270$}&&&\multicolumn{1}{c|}{}&&\\ 
\cline{1-2}\cline{3-4}\cline{5-6}
\multicolumn{1}{|c}{$220$}&\multicolumn{1}{c|}{$241$}&\multicolumn{1}{c|}{$262$}&\multicolumn{1}{c|}{$283$}&&\multicolumn{1}{c|}{}&&&\multicolumn{1}{c|}{}&&\\ 
\cline{1-2}\cline{3-4}
\multicolumn{1}{|c|}{$275$}&\multicolumn{1}{c|}{$296$}&\multicolumn{1}{c|}{}&&&\multicolumn{1}{c|}{}&&&\multicolumn{1}{c|}{}&&\\
\cline{1-2}
\end{tabular}
} 
\end{table}

\begin{table}[htbp]
  \centering
\scalebox{0.7}{
\begin{tabular}{ccccccccccc}
&&\multicolumn{1}{c|}{}&&&\multicolumn{1}{c|}{}&&&\multicolumn{1}{c|}{}&&\\ 
\cline{1-2}\cline{3-4}\cline{5-6}\cline{7-8}\cline{9-10}\cline{11-11}
\multicolumn{1}{|c}{$0$}&$5$&\multicolumn{1}{c|}{$2$}&$7$&$4$&\multicolumn{1}{c|}{$1$}&$6$&\multicolumn{1}{c|}{$3$}&\multicolumn{1}{c|}{$0$}&$5$&\multicolumn{1}{c|}{$2$}\\ 
\cline{6-7}\cline{8-9}\cline{10-11}
\multicolumn{1}{|c}{$7$}&$4$&\multicolumn{1}{c|}{$1$}&$6$&\multicolumn{1}{c|}{$3$}&\multicolumn{1}{c|}{$0$}&$5$&\multicolumn{1}{c|}{$2$}&\multicolumn{1}{c|}{$7$}&&\\ 
\cline{3-4}\cline{5-6}\cline{7-8}\cline{9-9} 
\multicolumn{1}{|c}{$6$}&\multicolumn{1}{c|}{$3$}&\multicolumn{1}{c|}{$0$}&$5$&\multicolumn{1}{c|}{$2$}&\multicolumn{1}{c|}{$7$}&$4$&\multicolumn{1}{c|}{$1$}&\multicolumn{1}{c|}{}&&\\ 
\cline{1-2}\cline{3-4}\cline{5-6}\cline{7-8}
\multicolumn{1}{|c}{$5$}&\multicolumn{1}{c|}{$2$}&\multicolumn{1}{c|}{$7$}&$4$&\multicolumn{1}{c|}{$1$}&\multicolumn{1}{c|}{$6$}&&&\multicolumn{1}{c|}{}&&\\ 
\cline{1-2}\cline{3-4}\cline{5-6}
\multicolumn{1}{|c}{$4$}&\multicolumn{1}{c|}{$1$}&\multicolumn{1}{c|}{$6$}&\multicolumn{1}{c|}{$3$}&&\multicolumn{1}{c|}{}&&&\multicolumn{1}{c|}{}&&\\ 
\cline{1-2}\cline{3-4}
\multicolumn{1}{|c|}{$3$}&\multicolumn{1}{c|}{$0$}&\multicolumn{1}{c|}{}&&&\multicolumn{1}{c|}{}&&&\multicolumn{1}{c|}{}&&\\
\cline{1-2}
\end{tabular}  \qquad 
\begin{tabular}{ccccccccccc}
&&\multicolumn{1}{c|}{}&&&\multicolumn{1}{c|}{}&&&\multicolumn{1}{c|}{}&&\\ 
\cline{1-2}\cline{3-4}\cline{5-6}\cline{7-8}\cline{9-10}\cline{11-11}
\multicolumn{1}{|c}{}&&\multicolumn{1}{c|}{}&&$\ctext{2}$&\multicolumn{1}{c|}{}&$\ctext{3}$&\multicolumn{1}{c|}{}&\multicolumn{1}{c|}{$\ctext{4}$}&$\ctext{5}$&\multicolumn{1}{c|}{}\\ 
\cline{6-7}\cline{8-9}\cline{10-11}
\multicolumn{1}{|c}{}&$\ctext{1}$&\multicolumn{1}{c|}{}&&\multicolumn{1}{c|}{}&\multicolumn{1}{c|}{$\ctext{3}$}&$\ctext{4}$&\multicolumn{1}{c|}{}&\multicolumn{1}{c|}{$\ctext{5}$}&&\\ 
\cline{3-4}\cline{5-6}\cline{7-8}\cline{9-9} 
\multicolumn{1}{|c}{}&\multicolumn{1}{c|}{}&\multicolumn{1}{c|}{$\ctext{2}$}&$\ctext{3}$&\multicolumn{1}{c|}{}&\multicolumn{1}{c|}{$\ctext{4}$}&$\ctext{5}$&\multicolumn{1}{c|}{}&\multicolumn{1}{c|}{}&&\\ 
\cline{1-2}\cline{3-4}\cline{5-6}\cline{7-8}
\multicolumn{1}{|c}{$\ctext{2}$}&\multicolumn{1}{c|}{}&\multicolumn{1}{c|}{$\ctext{3}$}&$\ctext{4}$&\multicolumn{1}{c|}{}&\multicolumn{1}{c|}{$\ctext{5}$}&&&\multicolumn{1}{c|}{}&&\\ 
\cline{1-2}\cline{3-4}\cline{5-6}
\multicolumn{1}{|c}{$\ctext{3}$}&\multicolumn{1}{c|}{}&\multicolumn{1}{c|}{$\ctext{4}$}&\multicolumn{1}{c|}{$\ctext{5}$}&&\multicolumn{1}{c|}{}&&&\multicolumn{1}{c|}{}&&\\ 
\cline{1-2}\cline{3-4}
\multicolumn{1}{|c|}{$\ctext{4}$}&\multicolumn{1}{c|}{$\ctext{5}$}&\multicolumn{1}{c|}{}&&&\multicolumn{1}{c|}{}&&&\multicolumn{1}{c|}{}&&\\
\cline{1-2}
\end{tabular}
} 
  \caption{${\rm Ap}(F_6,j)$ ($j=0,1,2,3,4$)}
\end{table}

Therefore, by Lemma \ref{cor-mp} (\ref{mp-g}) with (\ref{eq:bs}), we obtain that   
\begin{align*}  
g_0(F_6,F_8,F_{10})&=131-8=123\,,\\
g_1(F_6,F_8,F_{10})&=186-8=178\,,\\
g_2(F_6,F_8,F_{10})&=241-8=233\,,\\
g_3(F_6,F_8,F_{10})&=275-8=267\,,\\
g_4(F_6,F_8,F_{10})&=296-8=288\,.  
\end{align*}
By Lemma \ref{cor-mp} (\ref{mp-n}) with (\ref{eq:se}), we obtain that 
\begin{align*}  
n_0(F_6,F_8,F_{10})&=\frac{0+21+\cdots+131}{8}-\frac{8-1}{2}=63\,,\\
n_1(F_6,F_8,F_{10})&=\frac{63+84+\cdots+186}{8}-\frac{8-1}{2}=123\,,\\
n_2(F_6,F_8,F_{10})&=\frac{126+147+\cdots+241}{8}-\frac{8-1}{2}=180\,,\\
n_3(F_6,F_8,F_{10})&=\frac{168+181+\cdots+275}{8}-\frac{8-1}{2}=219\,,\\
n_4(F_6,F_8,F_{10})&=\frac{189+210+\cdots+296}{8}-\frac{8-1}{2}=242\,,  
\end{align*} 

On the other hand, 
from (\ref{eq:d1-123}), by $(F_6-2 F_4)F_8>F_2 F_6$, we get 
\begin{align*} 
g_1(F_6,F_8,F_{10})&=(F_6-2 F_4-1)F_{8}+3 F_{10}-F_6\\ 
&=178\,. 
\end{align*}
From (\ref{eq:d2-234}) and (\ref{eq:d3-2}), we get 
\begin{align*} 
g_2(F_6,F_8,F_{10})&=(F_6-2 F_4-1)F_{8}+4 F_{10}-F_6\\ 
&=233\,,\\
g_3(F_6,F_8,F_{10})&=(F_{1}-1)F_{8}+5 F_{10}-F_6\\ 
&=267\,, 
\end{align*}
respectively.  
From (\ref{eq:n1-123}), (\ref{eq:n2-234}) and (\ref{eq:n3-2}), we get 
\begin{align*} 
&n_1(F_6,F_8,F_{10})\\
&=\frac{1}{2}\bigl(F_6+2 F_4-1)F_{8}-F_6+1\bigr)-\left(2 F_6-\frac{(2-1)(2+2)}{2}F_4\right)F_{2}\\ 
&=123\,,\\ 
&n_2(F_6,F_8,F_{10})\\
&=\frac{1}{2}\bigl((F_6+4 F_4-1)F_{8}-F_6+1\bigr)-\frac{1}{2}\bigl(4 F_6-(2+3)(2-2)F_4\bigr)F_{2}\\
&=180\,,\\ 
&n_3(F_6,F_8,F_{10})\\
&=\frac{1}{2}\bigl((3 F_6-1)F_{8}-F_6+1\bigr)-(8 F_6-15 F_{4})F_{2}\\
&=219\,, 
\end{align*}
respectively.

\section{Open problems}  

In \cite{Fel2009}, a more general triple $g(F_a,F_b,F_c)$ is studied for distinct Fibonacci numbers with $a,b,c\ge 3$. 
In \cite{SKT}, the Frobenius number $g(a,a+b,2 a+3 b,\dots,F_{2k-1}a+F_{2 k}b)$ is given for relatively prime integers $a$ and $b$. 
Will we be able to say anything in terms of these $p$-Frobenius numbers?

\section*{Conflict of interests} 

There is no conflict of interests.

\section*{Acknowledgements}  

The authors thank the anonymous referees for careful reading of this manuscript.

\end{document}